\DeclareMathOperator{\diag}{diag}
\newcommand{\figref}[1]{\figurename~\hyperref[#1]{\ref{#1}}}
\newcommand{\EPS}{\varepsilon}
\newcommand{\PD}[2]{\frac{\partial{#1}}{\partial{#2}}}
\newcommand{\conj}[1]{\overline{#1}}
\newcommand{\lp}{\left(}
\newcommand{\rp}{\right)}
\newcommand\bx{\boldsymbol{x}}
\newcommand{\Smat}{\cI}
\newcommand{\Strunc}{\Smat^{\textrm{trunc}}}
\newcommand{\Pout}{\mathcal P}
\newcommand{\uin}{u^{\textrm{in}}}
\newcommand{\cD}{\mathcal D}
\newcommand{\cS}{\mathcal S}
\newcommand{\cF}{\mathcal F}
\newcommand{\cP}{\mathcal P}
\newcommand{\cI}{\mathcal I}
\newcommand{\restr}[2]{\left.#1\right|_{#2}}
\newcommand\bbR{\mathbb R}
\newcommand{\bs}{\boldsymbol}
\def\ccm{Center for Computational Mathematics, Flatiron Institute, Simons Foundation,
  New York, New York 10010}
\title{A domain decomposition method for computing the scattering matrix of waveguide circuits}
\author{Tristan Goodwill\thanks{Department of Statistics and CCAM, University of Chicago, Chicago, NY 
  (\email{tgoodwill@uchicago.edu}).}
\and Shidong Jiang\thanks{\ccm\, 
(\email{sjiang@flatironinstitute.org}).}
\and Manas Rachh\thanks{Department of Mathematics, Indian Institute of Technology Bombay, Mumbai, India 
(\email{mrachh@iitb.ac.in}).} 
\and Kosuke Sugita\thanks{\ccm\, 
(\email{ksugita@flatironinstitute.org}).} }
\begin{document}

\maketitle

\begin{abstract}
We analyze and develop numerical methods for time-harmonic wave scattering
in metallic waveguide structures of infinite extent. We show that
radiation boundary conditions formulated via projectors onto outgoing
modes determine the coefficients of propagating modes uniquely, even
when the structure supports trapped modes. Building on this, we introduce
a fast divide-and-conquer solver that constructs solution operators
on subdomains as impedance-to-impedance maps and couples them by enforcing
continuity conditions across their interfaces. For Dirichlet waveguides,
the computation of impedance-to-impedance maps requires the solution
of mixed Dirichlet–Impedance boundary value problems. We construct
a second-kind Fredholm integral equation that avoids near-hypersingular
operators, requiring only integral operators whose kernels are at most
weakly singular. Numerical experiments on large structures with many
circuit elements demonstrate substantial efficiency gains: the proposed
approach typically outperforms state-of-the-art fast iterative and
fast direct solvers by one to two orders of magnitude.
\end{abstract}

\begin{keywords}
scattering matrices, waveguides, Helmholtz equation, boundary integral equations, 
domain decomposition, fast algorithms
\end{keywords}

\begin{MSCcodes}31A10, 35Q61, 45B05, 65R20, 65F55
\end{MSCcodes}
\section{Introduction}
Accurate simulations for integrated photonic circuits play a vital part
in their design process. For this reason, there exists a large variety
of methods for simulating these systems, such as finite difference
time domain methods~\cite{JCP-1986-Berenger,MathComp-1986-Higdon,IEEETransEC-1981-Mur,IEEETransAntenna-1966-Yee},
finite element methods~\cite{Book-Brenner-Scott,Book-Itoh,Book-Koshiba,Book-Yamashita,Yeh1979SinglemodeOW,Young-1988,Book-Zienkiewicz},
beam propagation methods~\cite{ComCompP-2009-Antonie,JACT-2010-Antonie,RMMAN-2017-Bamberger,JCP-2017-Fan,Kragl-1992,Kumbhakar-2008,JOSAA-2009-Yioultsis},
and boundary integral equation methods~\cite{IEEE-Boriskina-2002, sideris2019ultrafast,garza2020boundary,tsukerman2008computational,yasumoto2018electromagnetic}.
A description of most of these methods can be found in~\cite{Book-Okamoto}.

In the time-harmonic setting, the electromagnetic waves in these devices
satisfy Maxwell's equations with appropriate boundary conditions. In
two dimensions, and in the transverse electric or transverse magnetic
modes, the waves satisfy Helmholtz boundary value problems. A key
difficulty in the simulation of these devices is their large size
as measured in wavelengths of propagating light --- typically the wavelength
is about a micrometer, while the devices are millimeters in size.
This corresponds to a computational domain that is about $1000$ wavelengths
across. As this is a high frequency problem, the complexity of many
of the above methods tends to scale at least quadratically in the
size of the computational domain.

A device typically consists of several photonic circuit elements and
a collection of ports which support a finite number of propagating
modes. In the ports, there also is an evanescent field, which can
be neglected as long as the length of the ports is $O(1)$ wavelengths
in size. Moreover, the design of these devices tends to be modular,
i.e., complicated photonic devices are assembled from a few identical
circuit elements and rectangular connectors (see~\figref{fig:domain_def}). 

In this paper, we develop an efficient method for the simulation of
such metallic waveguide systems in two dimensions. The solver consists
of two steps: dividing the domain into its individual circuit elements
and storing their solution operators as impedance-to-impedance maps,
followed by ``gluing'' the solutions across circuit elements by enforcing
continuity of the potential and its normal derivative. The impedance-to-impedance
maps of the circuit elements are projected onto an appropriate bases
whose cardinality depends only on the number of propagating modes
supported in the element and its neighbors (i.e., other circuit elements
that share an edge with it), and the size of the separator. More
importantly, the cardinality of the basis for representing the solution
in the individual components is independent of the size of element
as measured in wavelengths. This significantly reduces the size of
linear system to be solved in the second stage --- for many practical
devices, dense linear algebra methods can be used for its solution.
A key benefit of this modularization is that instead of solving a
partial differential equation (PDE) on a domain with~$N$ photonic
circuit elements, we can solve~$N$ uncoupled PDEs, one for each
photonic element, followed by the inversion of a significantly smaller
linear system.

{\bf Related work}:
The solution operators on the individual circuit elements are also
often referred to as a scattering matrix which map ``incoming'' data
to ``outgoing'' data. They have proven to be an extremely useful
mathematical tool and concept for studying any system that exhibits
the linear relationship between its input and output data. There is
a long history of research on the scattering theory in mathematical
physics, we do not seek to review the literature extensively, but
some recent examples include~\cite{SIAMAppMath-2001-Bendhia,SIAMAppMath-2011-Bendhia,SIAMCompAppMath-2007-Chandler,RSLPS-1998-Chandler,gimbutas2013fast,bremer2015high,borges2025construction}.
A discussion of their uses in the design and development of microwave
network systems can be found in~\cite{Book-Rao}, for example.

The solver presented in this work is also related to the Hierarchical
Poincar\'{e} Steklov (HPS) solvers for computing solutions of time-harmonic
wave-scattering problems using piecewise high-order spectral elements,
see~\cite{gillman2014direct,gillman2015spectrally,martinsson2015hierarchical,fortunato2021ultraspherical,fortunato2024high},
for example. Our method varies in two key regards. First, the discretization
of the individual circuit elements is based on an integral equation
formulation rather than a direct discretization of the PDE, and second
a hierarchical solve is not necessary in this context owing to the
small system size after constructing the solution operators of the
circuit elements. Akin to the method presented here, the HPS method
also has a gluing stage where the solution operators computed on sub-domains
are combined by enforcing smoothness of solutions across their boundaries.
This approach can easily be adapted for the solution of very large
networks of circuit elements when direct methods become computationally
expensive.

Finally, a related approach has been applied for the simulation Stokes
flow in complex branched structures~\cite{wang2025scattering}. In
this case, similar to metallic waveguides, there is one static mode
in each ``circuit'' element, while the rest of the modes decay exponentially.
The work assumes that the circuit elements are separated by a sufficiently
long straight channel so that all evanescent modes decay below a prescribed
tolerance --- we make no such assumption.

{\bf Contributions}:
There are four main contributions of this work. First, we present
a well-posed PDE formulation of the problem, including appropriate
radiation conditions at infinity. Following existing work, the outgoing
radiation conditions at infinity are expressed in terms of projectors
onto the propagating modes on the boundaries of the ports. The novelty
here is a proof of uniqueness of the coefficients of outgoing propagating
modes for any incoming data. While trapped modes are known to exist
in certain geometries, they do not project onto the propagating part
of the solution.  

Second, the construction of the impedance-to-impedance maps on the
circuit elements requires the solution of a mixed Dirichlet-Impedance
boundary value problem. Standard integral equations tend to require
the evaluation of near-hypersingular integrals. Inspired by the work
of~\cite{greengard2014fast}, we introduce a fictitious symmetric
boundary along the Dirichlet segment near the mixed-boundary junctions
to cancel the nearly hypersingular contributions on the adjacent impedance
segment. This modification results in an integral operator which is
Fredholm and only requires evaluation of integral operators with weakly
singular kernels.

Third, by retaining an appropriate number of decaying (evanescent)
modes in the impedance-to-impedance map, the connectors between circuit
elements can be made arbitrarily short without loss of accuracy. We
provide a heuristic, backed by numerical experiments, to empirically
determine the number of terms required in the basis. 

Finally, our divide-and-conquer domain decomposition approach is
highly efficient. We demonstrate its efficiency by comparing it to
three other methods in terms of the CPU-time required for computing
solutions on electromagnetically large photonic devices --- (i) a
fast iterative solver accelerated by wide-band fast multipole methods
(FMMs)~\cite{crutchfield2006remarks,fmm2d}, (ii) a fast direct
solver based on recursive skeletonization~\cite{SISC-Ho-2012,ho2020flam},
and (iii) a hybrid solver that uses a low-accuracy fast-direct solver
as a preconditioner for the FMM-accelerated iterative solver. Our
approach outperforms these solvers by one to two orders of magnitude.

The paper is organized as follows. Section~\ref{sec:setup} formulates
the problem and proves that the scattering matrix is well-defined.
Section~\ref{sec:I2Imap} introduces the impedance-to-impedance
map and a domain-decomposition strategy for its computation. Section~\ref{sec:bie}
constructs a Fredholm integral equation of the second kind for the
mixed Dirichlet–Impedance problem. Section~\ref{sec:alg} outlines
the complete numerical algorithm and analyzes its complexity. Section~\ref{sec:num_exam}
presents numerical results and compares our method with state-of-the-art
fast solvers.

\section{Problem setup}\label{sec:setup}
In two dimensions, metallic waveguides can be modeled by the Helmholtz equation
with homogeneous Dirichlet or Neumann boundary conditions.
Let the interior of the waveguide circuit be denoted~$\Omega$, and let $\partial \Omega$ denote 
its boundary. Suppose that this domain can be split into a compact region~$\Omega_0$ 
(with boundary $\partial \Omega_{0}$) and a collection of semi-infinite rectangles~$\Omega_1,\ldots,\Omega_P$, 
which represent the input/output channels of the device (see~\figref{fig:domain_def}).
Suppose further that the channels continue into~$\Omega_0$ at least a
distance~$L$. Finally, let~$\gamma_p$ be the boundary between~$\Omega_0$ and~$\Omega_p$ 
and let~$\Gamma =\partial\Omega_0 \setminus\lp \cup_{p=1}^P \gamma_p\rp$.

\begin{figure}
    \centering
    \includegraphics[width=1\linewidth]{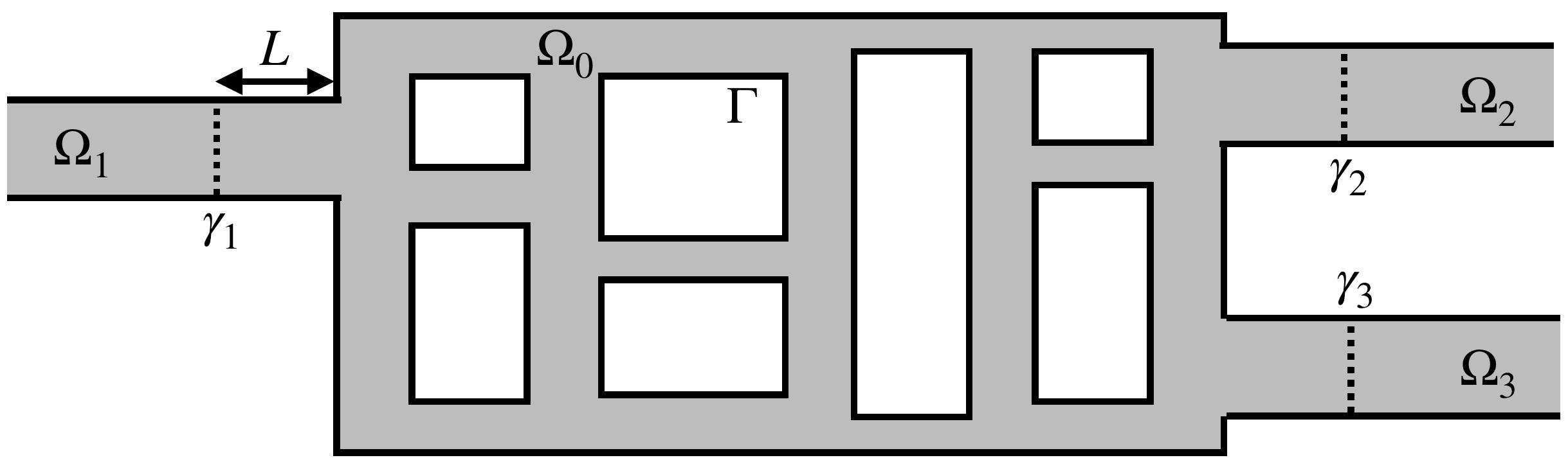}
    \caption{An example waveguide circuit. The compact region~$\Omega_0$ and three straight 
    channels~$\Omega_1,\Omega_2,$ and~$\Omega_3$. The regions~$\Omega_0$ and~$\Omega_j$ are separated 
    by a line segment~$\gamma_j$. The figure also illustrates the definition of the truncation length~$L$.}
    \label{fig:domain_def}
\end{figure}

For concreteness, consider the Dirichlet boundary value problem. Inside
$\Omega$, there is a potential $u$ that satisfies 
\begin{equation}
\begin{aligned}
\Delta u(\boldsymbol{x}) + k^2 u(\boldsymbol{x})&=0, \qquad \boldsymbol{x}\in \Omega, \\
u(\boldsymbol{x}) &= 0, \qquad \boldsymbol{x} \in \partial \Omega,
\end{aligned}\label{eq:dir_BVP}
\end{equation}
with appropriate conditions at infinity. The formulation of the conditions at
infinity requires understanding the nature of allowable solutions in each port
$\Omega_{p}$. 

The solutions in such rectangular ports are easily computed via separation 
of variables. Let~$(x_{p}, y_{p})$ denote local coordinates for the port~$\Omega_{p}$, 
obtained by an appropriate translation and rotation, such that 
\begin{equation}
\Omega_{p} = \bigg\{ (x_{p},y_{p})\in \mathbb{R}^{2}: \, x_{p} \in (0,\infty)\,, \text{ and }  y_{p} \in \left[-\frac{d_{p}}{2}, \frac{d_{p}}{2} \right] \bigg\} \, ,
\end{equation}
where~$d_p$ is the width of port~$p$, and $x_p=0$ corresponds to the boundary~$\gamma_p$.

Any solution $u \in H^1_{\mathrm{loc}}(\Omega_p)$ of the Helmholtz
equation in $\Omega_p$ with Dirichlet boundary conditions can be
written as an $H^1_{\mathrm{loc}}$-convergent series
\begin{equation}
    u(x_p,y_p) = \sum_{m=1}^\infty \lp c_{m,+}^{p}e^{i\beta_m^p x_p}+c_{m,-}^pe^{-i\beta_m^p x_p}\rp b_m^p(y_p), \label{eq:u_sep}
\end{equation}
where
\begin{equation}
    b_m^p(y_p) = \sqrt{\frac{2}{d_p}} \sin\lp\frac{m\pi}{d_p} \lp y_p+\frac {d_p}2\rp\rp \quad\text{and}\quad \beta_m^p = \sqrt{k^2-\lp\frac{m\pi}{d_p}\rp^2}.\label{eq:dir_modes}
\end{equation}
The basis functions~$e^{\pm i\beta_m^px_p}b_m^p(y_p)$ are referred to as the modes supported by~$\Omega_p$. 
For~$m$ small,~$\beta_m^p$ is real, and so the mode will propagate in the channel with constant modulus. 
We adopt the following convention: the coefficient~$c_{m,-}^p$ corresponds to a mode incident on~$\gamma_p$ 
(an “incoming” mode), whereas $c_{m,+}^p$ corresponds to a mode scattered away from~$\gamma_p$ 
(an “outgoing” propagating mode). For sufficiently large $m$,~$\beta_m^p$ is purely imaginary; adopting the 
standard square-root branch (so ${\rm Im}\beta_m^p>0$), the mode $e^{i\beta_{m}^{p}x_{p}}$ is evanescent 
and decays exponentially, whereas the mode $e^{-i\beta_{m}^{p}x_{p}}$ grows exponentially.  In what follows, 
let~$M_p$ denote the number of propagating modes for each subdomain $\Omega_p$. We will always assume 
that~$M_p>0$, i.e., $k d_p> \pi$.

Physically meaningful scattering problems in such domains correspond to the computation of an outgoing 
scattered field in response to incoming propagating modes. An outgoing scattered field is one which includes 
only the outgoing propagating modes, and the evanescent modes. More precisely, for each port, we introduce 
the following projection operators
\begin{equation}
    \cP_m^p u = \int_{\gamma_p} \bar{b}_m^p u\quad \text{and}\quad \cP_m^{'p} u = \int_{\gamma_p} \bar{b}_m^p \partial_{x_p} u\,.
\end{equation}
Using these, the coefficients in~\eqref{eq:u_sep} are given by
\begin{equation}
    c^p_{m,\pm} = \frac{1}{ 2i\beta_m^p}\lp \pm\cP_m^{'p} u +i\beta_m^p \cP_m^p u \rp .\label{eq:proj_coef}
\end{equation}
Suppose further that the field can be written as
\begin{equation}
    \begin{cases}
        u|_{\Omega_p} = \uin_p + u_p & p=1,\ldots, P\\
        u|_{\Omega_0} = u_0
    \end{cases}, \label{eq:u_piece}
\end{equation}
where~$\uin_p$ are prescribed incoming fields of the form
\begin{equation}
    \uin_p(x_p,y_p) = \sum_{m=1}^{M_p} c_{m,-}^p e^{-i\beta_m^p x_p} b_m^p(y_p)
\end{equation}
and the~$u_p$'s are outgoing in the sense that
\begin{equation}
    \lp\cP_m^{'p}  - i\beta_m^p \cP_m^p\rp u_p=0 \label{eq:piece_out}
\end{equation}
for all~$m$. By the relation~\eqref{eq:proj_coef}, this outgoing condition is equivalent to the statement 
that each~$u_p$ can be written as
\begin{equation}
    u_p(x_p,y_p) = \sum_{m=1}^{\infty} c_{m,+}^p e^{i\beta_m^p x_p} b_m^p(y_p).\label{eq:piece_outexp}
\end{equation}

For~$u$ defined by~\eqref{eq:u_piece} to be a solution of~\eqref{eq:dir_BVP}, $u_0,\ldots,u_P$ satisfy 
the following system of equations
\begin{equation}
    \begin{split}
        \Delta u_0 + k^2u_0=0 &\; \text{in }\Omega_0\\
        \Delta u_p + k^2u_p=0 & \;\text{in }\Omega_p\label{eq:piece_helm}
    \end{split}
\end{equation}
for~$p=1,\ldots,P$, along with the boundary conditions, 
\begin{equation}
        \begin{split}
                u_0 =0 & \;\text{on } \Gamma\\
        u_p=0 & \;\text{on } \partial\Omega_p \setminus \gamma_{p} \\
        u_0 = u_p+\uin_p & \;\text{on }\gamma_p\\
        \partial_{n_p}u_0 =\partial_{n_p} u_p+\partial_{n_p}\uin_p & \;\text{on }\gamma_p \,.
    \end{split} \label{eq:piece_jump}
\end{equation}
Furthermore $u_{p}$, $p=1,2,\ldots P$ are outgoing solutions in $\Omega_{p}$ and also 
satisfy~\eqref{eq:piece_out} on $\gamma_{p}$.

It is well known that there may be nontrivial solutions of the whole system~\eqref{eq:piece_out}, 
\eqref{eq:piece_helm}, and \eqref{eq:piece_jump} even with~$\uin_p = 0$ for all~$p$. These nontrivial 
solutions are called trapped modes and are known to be evanescent. We will therefore generally only 
be able to uniquely determine the coefficients of the outgoing propagating modes.
To prove this uniqueness, we introduce the following lemma, which is an immediate consequence of Green's identity.
\begin{lemma}[Generalized optical Theorem (See Lemma 3.2.1 in~\cite{Book-Nedelec})]\label{lem:opt_thm}
    If~$u_0$ satisfies
    \begin{equation}
            \Delta u_0+k^2 u_0 =0\;\text{in }\Omega_0
    \end{equation}
    then the following equation holds:
  \begin{equation}
    {\rm Im} \int_{\partial\Omega_0} u_0 \overline{\frac{\partial u_0}{\partial {n}}} ds = 0
    \label{eq:rellich-identity}
  \end{equation}
  where~${n}$ is the outward normal to~$\partial\Omega_0$.
\end{lemma}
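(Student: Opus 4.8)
The plan is to obtain \eqref{eq:rellich-identity} directly from Green's identity, exactly as the statement advertises, by pairing $u_0$ against its own complex conjugate. First I would apply Green's first identity on $\Omega_0$ to the functions $u_0$ and $\conj{u_0}$, which gives
\begin{equation}
  \int_{\partial\Omega_0} u_0 \, \conj{\PD{u_0}{n}} \, ds
  \;=\; \int_{\Omega_0} \lp u_0 \, \conj{\Delta u_0} + \nabla u_0 \cdot \conj{\nabla u_0} \rp dV
  \;=\; \int_{\Omega_0} \lp u_0\,\conj{\Delta u_0} + |\nabla u_0|^2 \rp dV .
\end{equation}
Next I would substitute the Helmholtz equation: conjugating $\Delta u_0 + k^2 u_0 = 0$ and using that $k$ is real yields $\conj{\Delta u_0} = \Delta\conj{u_0} = -k^2\,\conj{u_0}$, so the right-hand side becomes
\begin{equation}
  \int_{\Omega_0} \lp |\nabla u_0|^2 - k^2 |u_0|^2 \rp dV ,
\end{equation}
which is a real number. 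Taking imaginary parts of both sides then gives \eqref{eq:rellich-identity}. An even shorter route is to use Green's \emph{second} identity for $u_0$ and $\conj{u_0}$: the volume integrand $\conj{u_0}\,\Delta u_0 - u_0\,\Delta\conj{u_0}$ vanishes identically under the Helmholtz equation (with $k$ real), so $\int_{\partial\Omega_0} u_0\,\conj{\PD{u_0}{n}}\,ds$ equals its own complex conjugate and is therefore real; this is the usual flux/energy-balance reading of the identity.

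The only point requiring care is regularity up to $\partial\Omega_0$: the compact piece $\Omega_0$ is only a Lipschitz domain, with corners where $\Gamma$ meets the separators $\gamma_p$ and, in general, corners along $\Gamma$ itself. To make the application of Green's identity and the boundary integral legitimate, I would work with $u_0 \in H^1(\Omega_0)$ satisfying $\Delta u_0 \in L^2(\Omega_0)$ and interpret the boundary term through the duality pairing between $H^{1/2}(\partial\Omega_0)$ and $H^{-1/2}(\partial\Omega_0)$; Green's identity in this form is classical on Lipschitz domains. Alternatively, one exhausts $\Omega_0$ by an increasing family of smooth subdomains (or rounds the corners), applies the smooth version of the identity there, and passes to the limit using the $H^1_{\mathrm{loc}}$ regularity of $u_0$. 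I expect this bookkeeping to be the only place where any argument is needed at all — everything else is the one-line computation above — which is why the result is simply quoted from Lemma 3.2.1 of~\cite{Book-Nedelec}.
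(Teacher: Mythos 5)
Your proposal is correct and follows exactly the route the paper intends: the paper gives no proof of its own, simply declaring the lemma "an immediate consequence of Green's identity" and citing Lemma 3.2.1 of~\cite{Book-Nedelec}, and your pairing of $u_0$ with $\conj{u_0}$ via Green's first identity is precisely that argument. Your additional care about the Lipschitz regularity of $\Omega_0$ and the $H^{1/2}$--$H^{-1/2}$ duality interpretation of the boundary term is a sensible elaboration of what the paper leaves implicit.
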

This lemma allows us to prove the following uniqueness theorem.
\begin{theorem}\label{thm:uniq_prop}
    If~$\uin_p=0$ for all~$p=1,2,\ldots P$ and~$u_0,\ldots,u_P$ solve~\eqref{eq:piece_out}, 
    \eqref{eq:piece_helm}, and~\eqref{eq:piece_jump}, then 
    \begin{equation}
        \lp\cP_m^{'p}  + i\beta_m^p \cP_m^p\rp u_p=0
    \end{equation}
    for~$m=1,\ldots,M_p$ and all~$p$.
\end{theorem}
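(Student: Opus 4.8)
The plan is to feed the matching and radiation structure into the generalized optical theorem (Lemma~\ref{lem:opt_thm}) and observe that, once $\uin_p=0$, the only surviving boundary flux lives on the port cross-sections and is a nonnegative quadratic form in the outgoing propagating coefficients. First I would apply the identity \eqref{eq:rellich-identity} not to $\Omega_0$ directly but to the enlarged truncated domain $\tOmega_a := \Omega_0 \cup \bigcup_{p=1}^{P}\bigl(\Omega_p\cap\{x_p<a\}\bigr)$ for an arbitrary $a>0$. With $\uin_p=0$, the interface conditions in \eqref{eq:piece_jump} reduce to continuity of $u$ and of its normal derivative across each $\gamma_p$, so the global field $u$ (equal to $u_0$ on $\Omega_0$ and to $u_p$ on $\Omega_p$) is an $H^1$ Helmholtz solution on $\tOmega_a$ and the identity applies (being a consequence of Green's identity, it is valid on any bounded Lipschitz domain). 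The boundary $\partial\tOmega_a$ is the disjoint union of $\Gamma$, the straight side walls of the ports, and the $P$ cross-sections $\{x_p=a\}$; on $\Gamma$ we have $u_0=0$ and on the side walls $u_p=0$ by \eqref{eq:piece_jump}, so those pieces contribute nothing and there remains
\begin{equation}
  \sum_{p=1}^{P}\,{\rm Im}\int_{\{x_p=a\}} u_p\,\overline{\partial_{x_p}u_p}\,dy_p \;=\; 0 . \label{eq:uniqflux}
\end{equation}

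Next I would evaluate each cross-sectional term with the modal expansion \eqref{eq:piece_outexp}. For $a>0$ the series and its $x_p$-derivative converge absolutely and uniformly on $\{x_p=a\}$ because the evanescent terms are exponentially damped, so by the $L^2$-orthonormality of $\{b_m^p\}$ on $[-d_p/2,d_p/2]$ that integral equals $-\,i\sum_{m\ge 1}\overline{\beta_m^p}\,e^{-2({\rm Im}\,\beta_m^p)a}\,|c_{m,+}^p|^2$. A propagating mode ($m\le M_p$, so $\beta_m^p$ is real and positive) contributes the purely imaginary term $-i\beta_m^p|c_{m,+}^p|^2$, while an evanescent mode ($\beta_m^p=i|\beta_m^p|$) contributes a real term; hence ${\rm Im}\int_{\{x_p=a\}} u_p\overline{\partial_{x_p}u_p}\,dy_p = -\sum_{m=1}^{M_p}\beta_m^p|c_{m,+}^p|^2$, independent of $a$ (conservation of power flux along the port). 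Substituting into \eqref{eq:uniqflux} gives $\sum_{p=1}^{P}\sum_{m=1}^{M_p}\beta_m^p|c_{m,+}^p|^2=0$, and since $\beta_m^p>0$ for every propagating mode, necessarily $c_{m,+}^p=0$ for all $m\le M_p$ and all $p$.

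It then remains to restate this via the projectors. Applying \eqref{eq:proj_coef} to $u_p$, the coefficient $c^p_{m,+}$ of $u_p$ equals $\frac{1}{2i\beta_m^p}\bigl(\cP_m^{'p}+i\beta_m^p\cP_m^p\bigr)u_p$, so the conclusion $c^p_{m,+}=0$ for $m\le M_p$ is exactly $\bigl(\cP_m^{'p}+i\beta_m^p\cP_m^p\bigr)u_p=0$ for those $m$. Together with the standing outgoing condition \eqref{eq:piece_out}, which says $c^p_{m,-}=0$, this shows $u_p$ carries no propagating mode at all.

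The delicate points are bookkeeping rather than analysis. The one I expect to cost the most care is the first step: verifying that \eqref{eq:rellich-identity} genuinely applies on the non-smooth domain $\tOmega_a$ and that no spurious boundary term appears at the corners where $\gamma_p$ meets $\Gamma$ or where a port wall meets $\{x_p=a\}$ --- this is where one uses that $u_p$ is an outgoing $H^1_{\rm loc}$ solution, so that all traces and duality pairings are well defined and the matching conditions truly glue $u$ into an $H^1$ solution on $\tOmega_a$. Placing the truncation inside the straight part of a channel ($a>0$) is precisely what makes $\{x_p=a\}$ a flat segment on which the modal series converges classically, sidestepping any need to integrate the full series directly on $\gamma_p$. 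I would also record the standing (generic) assumption that no mode sits exactly at cutoff, i.e.\ $\beta_m^p\neq 0$ for every $m$ and $p$.
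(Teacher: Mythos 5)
Your argument is correct and is essentially the paper's proof: both feed the vanishing incoming data and the homogeneous wall conditions into the generalized optical theorem (Lemma~\ref{lem:opt_thm}), reduce the resulting flux identity to the port cross-sections, expand $u_p$ via \eqref{eq:piece_outexp}, and use orthonormality of the $b_m^p$ to conclude that every propagating coefficient $c_{m,+}^p$ vanishes. The only difference is a technical refinement: you evaluate the flux on the shifted cross-sections $\{x_p=a\}$, $a>0$, of an enlarged domain so that the modal series converges classically, whereas the paper substitutes the expansion directly on $\gamma_p$; the computation and conclusion are identical.
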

\begin{proof}
    Since~$\uin_p=0$ for all~$p$ and~$u_0|_\Gamma=0$, it follows from Lemma~\ref{lem:opt_thm} that
    \begin{equation}
        {\rm Im}\sum_p \int_{\gamma_p} u_p \conj{\frac{\partial u_p}{\partial x_p}} =  {\rm Im} \sum_p\int_{\gamma_p} u_0 \conj{\frac{\partial u_0}{\partial x_p}}=0.
        \label{eq:uupint}
    \end{equation}

    Substituting the outgoing expansion~\eqref{eq:piece_outexp} of $u_p$ into \eqref{eq:uupint} for each $p$ yields
    \begin{equation}
        {\rm Im} \sum_{p=1}^P \sum_{m,n=1}^\infty c_{m,+}^p  (-i\conj{\beta_n^p}) \conj{c_{n,+}^p} \int_{\gamma_p}b^p_m(y_p)b^p_n(y_p)dy_p =0.
    \end{equation}
  Using the orthonormality of the~$b_m^p$'s, we obtain
    \begin{equation}
        0= \sum_{p=1}^P \sum_{m=1}^\infty |c_{m,+}^p|^2 {\rm Im} \lp -i\conj{\beta_m^p} \rp.\label{eq:opt_prop}
    \end{equation}
    When~$m\leq M_p$, we have~$\beta_m^{p}\in\bbR$, so 
    \begin{equation}
         {\rm Im} \lp -i\conj{\beta_m^p} \rp  = -\beta_m^p.
    \end{equation}
    When~$m> M_{p}$, we have~$\beta_m^{p}\in i\bbR^+$, so 
        \begin{equation}
         {\rm Im} \lp -i\conj{\beta_m^p} \rp =   0.
    \end{equation}
    Plugging these expressions into~\eqref{eq:opt_prop} gives that
    \begin{equation}
        0 =  \sum_{p=1}^P \sum_{m=1}^{M_p} |c_{m,+}^p|^2 \beta_m^p.
    \end{equation}
    Because~$\beta_m^p>0$ for~$m\leq M_p$, we concluse that~$c_{m,+}^p=0$ for each~$m\leq M_p$. 
    This completes the proof.
\end{proof}
We now define the scattering matrix that completely characterizes the whole waveguide structure.
The global vector of \textbf{outgoing} propagating mode coefficients,
$\vec{c}_{+}$, is the concatenation of the coefficient vectors from
each of the $P$ ports: $\vec{c}_{+} = [\vec{c}_{+}^{\,1}; \vec{c}_{+}^{\,2}; \ldots; \vec{c}_{+}^{\,P}]$.
Each block $\vec{c}_{+}^{\,p}$ is a column vector containing the
$M_p$ outgoing mode coefficients for port $p$: $\vec{c}_{+}^{\,p} = [c^{p}_{1,+}; c^{p}_{2,+}; \ldots; c^{p}_{M_{p},+}]$.
Similarly, the global vector of \textbf{incoming} propagating mode
coefficients, $\vec{c}_{-}$, is structured in the same manner: $\vec{c}_{-} = [\vec{c}_{-}^{\,1}; \vec{c}_{-}^{\,2}; \ldots; \vec{c}_{-}^{\,P}]$.
Each corresponding block $\vec{c}_{-}^{\,p}$ contains the $M_p$
incoming mode coefficients for port $p$: $\vec{c}_{-}^{\,p} = [c^{p}_{1,-}; c^{p}_{2,-}; \ldots; c^{p}_{M_{p},-}]$.

\begin{definition} 
The scattering matrix, denoted by $\cS$, is the linear operator that maps the incoming coefficients 
vector~$\vec{c}_{-}$ to the outgoing coefficients vector~$\vec{c}_{+}$, i.e.,
\begin{equation}
\vec{c}_{+} = \cS \vec{c}_{-}.
\end{equation}
\end{definition}
Theorem~\ref{thm:uniq_prop} shows that $\cS$ is well-defined.
\subsection{Neumann boundary conditions}\label{sec:neumann_BC}
The above analysis extends to photonic circuits with Neumann boundary conditions:
\begin{equation}
\begin{aligned}
\Delta u(\boldsymbol{x}) + k^2 u(\boldsymbol{x})&=0, \qquad \boldsymbol{x}\in \Omega \\
\partial_{ n}u(\boldsymbol{x}) &= 0, \qquad \boldsymbol{x} \in \partial \Omega,
\end{aligned}\label{eq:neu_BVP}
\end{equation}
where~$n$ is the normal to~$\Omega$. For these systems, the modal decomposition~\eqref{eq:u_sep} holds 
with \eqref{eq:dir_modes} replaced by
\begin{equation}
    b_m^p(y_p) = \sqrt{\frac{2-\delta_{m1}}{d_p}}\cos\lp\frac{(m-1)\pi}{d_p} \lp y_p+\frac d2\rp\rp, \qquad \beta_m^p = \sqrt{k^2-\lp\frac{(m-1)\pi}{d_p}\rp^2}.\label{eq:neu_modes}
\end{equation}
Since the~$b_m^p$ are still orthonormal on~$\gamma_p$, the equivalent projections can still be used 
to find the coefficients~$c_{m,+}^p$ and Theorem~\ref{thm:uniq_prop} remains valid.

\section{Impedance-to-impedance maps}\label{sec:I2Imap}
In order to solve the system of equations~\eqref{eq:piece_helm}
and~\eqref{eq:piece_jump}, we will use the impedance-to-impedance
map for the truncated domain $\Omega_{0}$, which we now define. Recall
that the boundary of $\Omega_{0}$ is given by $\Gamma \cup \lp{\cup_{p=1}^{P} \gamma_{j}}\rp$.
We demonstrate the construction of solutions for the Dirichlet waveguides,
an identical procedure can be used for the Neumann waveguides. Consider
the following boundary value problem (BVP)
\begin{equation}\label{eq:comp_IBVP}
\begin{aligned}
(\Delta + k^2) v_{0}(\bx) &= 0\,, \quad \bx \in \Omega_{0} \, ,\\
v_{0}(\bx) &= 0\,, \quad \bx \in \Gamma \, ,\\
\frac{\partial v_{0}(\bx)}{\partial n} + i \eta v_{0}(\bx) &= f_{p}(\bx)\,, \quad \bx \in \gamma_{p} \,, 
\end{aligned}
\end{equation}
for $p=1,2,\ldots P$. 
\begin{lemma}
Suppose~$\eta$ is complex and~$\Re \eta\neq 0$.
For any $f_{p} \in H^{-1/2}(\gamma_{p})$ 
 \eqref{eq:comp_IBVP} has a unique solution $v_{0} \in H^1(\Omega_{0})$.     
\end{lemma}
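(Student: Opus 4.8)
The plan is to establish the result as a standard application of the Fredholm alternative for the mixed Dirichlet–Impedance (Robin) boundary value problem on the bounded Lipschitz domain $\Omega_0$. First I would set up the appropriate variational formulation: seek $v_0 \in V := \{w \in H^1(\Omega_0) : w|_\Gamma = 0\}$ such that
\begin{equation}
\int_{\Omega_0} \lp \nabla v_0 \cdot \overline{\nabla w} - k^2 v_0 \overline{w}\rp\, d\bx + i\eta \sum_{p=1}^P \int_{\gamma_p} v_0 \overline{w}\, ds = \sum_{p=1}^P \int_{\gamma_p} f_p \overline{w}\, ds
\end{equation}
for all $w \in V$. The sesquilinear form on the left, call it $a(v_0,w)$, is bounded on $V \times V$, and writing $a(v_0,w) = a_1(v_0,w) - (k^2+1)\langle v_0,w\rangle_{L^2(\Omega_0)}$ where $a_1$ adds back the $\|v_0\|_{L^2}^2$ term, one checks that $\Re\, a_1(v_0,v_0) \geq \|v_0\|_{H^1}^2$ (using $\Re(i\eta)\cdot 0 = 0$ — actually $\Re(i\eta)|v_0|^2$ integrates to a real quantity of sign $-\Im\eta$, so one may need the sign of $\Im\eta$, or simply absorb it; the boundary term contributes $\Re(i\eta)\|v_0\|_{L^2(\gamma)}^2 = -\Im\eta\,\|v_0\|_{L^2(\gamma)}^2$ which, if unfavorable, is controlled by a trace inequality and a small shift in the constant). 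Thus $a_1$ is coercive, and since the embedding $V \hookrightarrow L^2(\Omega_0)$ is compact (Rellich), the operator associated with $a$ is a compact perturbation of an invertible one. The Fredholm alternative then reduces existence and uniqueness to uniqueness alone.

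The uniqueness step is where the hypothesis $\Re\eta \neq 0$ enters decisively, and this is the main point to get right. Suppose $v_0 \in V$ solves the homogeneous problem ($f_p = 0$). Take $w = v_0$ in the variational identity and extract the imaginary part: the gradient and $L^2$ terms are real, so $0 = \Im\lp i\eta \sum_p \int_{\gamma_p} |v_0|^2\rp = \Re\eta \sum_p \int_{\gamma_p} |v_0|^2\, ds$. Since $\Re\eta \neq 0$, we get $v_0 = 0$ on each $\gamma_p$, and $v_0 = 0$ on $\Gamma$ by definition of $V$, so $v_0 = 0$ on all of $\partial\Omega_0$; moreover the Robin condition then forces $\partial_n v_0 = f_p - i\eta v_0 = 0$ on each $\gamma_p$ as well. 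So $v_0$ has vanishing Cauchy data on the portions $\gamma_p$ (and vanishing Dirichlet data everywhere). One then invokes Holmgren's uniqueness theorem (or unique continuation for the Helmholtz operator): a solution of $(\Delta + k^2)v_0 = 0$ on a connected open set with zero Cauchy data on a relatively open piece of a real-analytic boundary arc must vanish identically. This gives $v_0 \equiv 0$.

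The main obstacle is handling the mixed boundary condition rigorously on a domain whose boundary is only Lipschitz at the junctions between $\Gamma$ and the $\gamma_p$'s: the interior elliptic regularity needed for unique continuation is automatic away from the corners, and the corners themselves are a set of zero capacity so they do not obstruct Holmgren's theorem applied on the smooth interior of the arc $\gamma_p$ — but this should be stated carefully. A clean alternative that sidesteps unique continuation entirely: extend $v_0$ by $0$ across $\gamma_p$ into the port $\Omega_p$; since both $v_0$ and $\partial_n v_0$ vanish on $\gamma_p$, the zero-extension is a weak $H^1_{\mathrm{loc}}$ solution of the Helmholtz equation in $\Omega_0 \cup \gamma_p \cup \Omega_p$, hence real-analytic there, and being zero on the open port it vanishes on the whole connected component — and then back on $\Omega_0$. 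Either route closes the argument; I would present the Holmgren version as the primary one and remark on the extension trick. Finally, I would note that the variational solution $v_0$ has the requisite normal trace in $H^{-1/2}(\gamma_p)$ so the Robin condition is meaningful in the trace sense, which completes the identification of $v_0 \in H^1(\Omega_0)$ as the unique solution.
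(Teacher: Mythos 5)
Your proposal is correct and takes essentially the same route as the paper, whose proof is a one-line sketch citing exactly these ingredients: a G\r{a}rding inequality plus the Fredholm alternative for existence, and the energy (imaginary-part) identity together with unique continuation for uniqueness. Your write-up simply fills in the details of that sketch, including the correct use of $\Re\eta\neq 0$ to kill the trace on $\cup_p\gamma_p$ before invoking Holmgren or the zero-extension argument.
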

\begin{proof}
    Uniqueness follows from the energy identity plus unique continuation; existence follows from a 
    G\r{a}rding inequality and the Fredholm alternative (see, for example,~\cite{mclean2000,gillman2015spectrally}).
\end{proof}
Once we have solved \eqref{eq:comp_IBVP}, we can compute the outgoing impedance 
data $g_{p} = \frac{\partial v_{0}}{\partial n} - i\eta v_{0}$ for $\bx \in \gamma_{p}$, $p =1,2\ldots P$. 
We define the impedance-to-impedance map for this setup, denoted $\Smat$, as the operator that maps 
the input vector $[f_{1}; f_{2}; \ldots f_{P}]$ to the output vector $[g_{1}; g_{2}; \ldots g_{P}]$. 
Both $f_p$ and $g_p$ contain contributions from the complete basis of propagating and evanescent modes. 
Consequently, their modal coefficient vectors belong to the infinite-dimensional Hilbert space $l^2$.

We now introduce a truncated impedance-to-impedance map, denoted by $\Strunc$. This map operates on a 
restricted input space, requiring that the input $f_p$ lies in the subspace $\text{span}\{b_m^p\}_{m=1}^{M_p}$. 
The map then projects the corresponding full output, $g_p$, onto this same subspace to produce the final 
output, $\tilde{g}_p$. In other words, we consider only the sine-series coefficients that correspond to 
propagating modes. More precisely, suppose that
\begin{equation}
f_{p}(y_{p}) = \sum_{m=1}^{M_{p}} \hat{f}_{m}^{p} b_{m}^{p}(y_{p}) \, ,\quad p=1,2,\ldots P \,,
\end{equation}
where $y_{p}$ as before is the transverse coordinate in port $p$. The functions $g_{1}, g_{2} \ldots g_{p}$ 
can also be expressed in a sine-series expansion since the homogeneous Dirichlet conditions imply 
that $\frac{\partial v_{0}}{\partial n} = \frac{\partial v_{0}}{\partial x_{p}} = 0$ at $y_{p} = \pm d_{p}/2$. 
Let $\hat{g}_{m}^{p}$ denote the sine-series coefficients of $g_{p}$, i.e.,
\begin{equation}
g_{p}(y_p) = \sum_{m=1}^{\infty} \hat{g}_{m}^{p} b_{m}^{p}(y_{p}) \,,
\end{equation}
for $p=1,2,\ldots P$. In this basis, the truncated impedance-to-impedance operator $\Strunc$ is represented 
by a matrix that maps the input coefficients $\hat{f}_{m}^{p}$ to the output coefficients $\hat{g}_{m}^{p}$, 
for $m=1,2,\ldots M_{p}$ and $p=1,2,\ldots P$. In other words, we have
\begin{equation}
\Strunc = \Pout\Smat\Pout,
\label{eq:PIP}
\end{equation}
where $\Pout$ is the projection operator onto the finite dimensional subspace containing only propagating
modes.

The key observation is that  $\Strunc$ serves as a compact mathematical representation of the interior 
domain $\Omega_0$, containing all the necessary information about its scattering characteristics.
To illustrate how $\Strunc$ can be used to solve the whole system~\eqref{eq:piece_out}, \eqref{eq:piece_helm} 
and ~\eqref{eq:piece_jump}, we consider a simplified two-port geometry ($P=2$) where each port supports a 
single propagating mode ($M_1=M_2=1$). The field is excited by an incoming wave in Port 1,  
$\uin_{1} = e^{-i \beta_{1}^{1}x_{1}} b_{1}^{1}(y_{1})$, while the incoming field in Port 2 is zero. 
We assume the port truncation length $L$ is sufficiently large such that evanescent modes have 
decayed ($|e^{-i \beta_{m}^{p} L}| < \varepsilon$ for all $m\ge 2$ and $p=1,2$). Under this assumption, 
$u_p = c^{p}_{+} e^{i \beta_{1}^{p}x_{p}} b_{1}^{p}(y_{p})+ O(\varepsilon)$, where the coefficients of the 
outgoing propagating modes $c_+^{1}, c_+^{2}$ are unknowns. Ignoring the $O(\varepsilon)$ term, the impedance 
data on the boundaries $\gamma_1$ and $\gamma_2$ are defined in terms of their first basis coefficients,
$\hat{f}^{1}_1$ and $\hat{f}^{2}_1$: 
\[ f_{1}(y_1) = \partial_{n_{1}} u_{0} + i\eta u_{0} = \hat{f}^{1}_1b_{1}^{1}(y_{1}) \qquad \text{and} \qquad f_{2}(y_2) = \partial_{n_{2}} u_{0} + i\eta u_{0} = \hat{f}^{2}_1b_{1}^{2}(y_{2}) \] 
Similarly, let $\hat{g}^{p}_{1}$ be the sine series coefficients of $\partial_{n_{p}} u _{0} - i\eta u_{0}$. 
The truncated operator $\Strunc$ then provides the linear map between these coefficients: 
\begin{equation} \Strunc \begin{bmatrix} \hat{f}_{1}^{1} \\ \hat{f}_{1}^{2} \end{bmatrix} = \begin{bmatrix} \hat{g}_{1}^{1} \\ \hat{g}_{1}^{2} \end{bmatrix}
\label{eq:Itruncsimple}
\end{equation}
Let $\vec{c}_{+} = [c^{1}_{+}; c^{2}_{+}]$, $\vec{\hat{f}} = [\hat{f}^{1}_{1}; \hat{f}^{2}_{1}]$,
and $\vec{\hat{g}} = [\hat{g}^{1}_{1}; \hat{g}^{2}_{1}]$.
Then the continuity conditions in \eqref{eq:piece_jump} are equivalent to 
\begin{equation}
    \partial_{n_{p}} u_{0} \pm i\eta u_{0}
    = (\partial_{n_{p}} u_{p} \pm i\eta u_{p}) + (\partial_{n_{p}} \uin_{p} \pm i\eta \uin_{p}), \qquad p=1,2.
\end{equation}
That is,
\begin{equation}
\begin{aligned}
\vec{\hat{f}} &= D_+ \vec{c}_{+} - 
\begin{bmatrix}
i(\beta_{1}^1 - \eta) \\
0
\end{bmatrix},\\
\vec{\hat{g}} &= D_- \vec{c}_{+} - 
\begin{bmatrix}
i(\beta_{1}^1 + \eta) \\
0
\end{bmatrix},
\end{aligned}
\label{eq:fgDc}
\end{equation}
where $D_{\pm}$ are $2 \times 2$ diagonal matrices given by 
\begin{equation}
D_{\pm} = \begin{bmatrix} 
i(\beta_{1}^{1} \pm \eta) & 0 \\
0 & i(\beta_{1}^{2} \pm \eta)
\end{bmatrix}\,.
\end{equation}
Combining \eqref{eq:Itruncsimple}
and \eqref{eq:fgDc}, we obtain 
\begin{equation}
\begin{bmatrix}
D_{+} & -I \\
D_{-} & -\Strunc
\end{bmatrix}
\begin{bmatrix}
\vec{c}_{+} \\ \vec{\hat{f}} 
\end{bmatrix}
= 
\begin{bmatrix}
i(\beta_{1}^1 - \eta) \\
0
\\
i(\beta_{1}^1 + \eta) \\
0 
\end{bmatrix} \,,
\end{equation}

This approach extends directly to a general waveguide circuit with $P$ ports, each supporting $M_p$ 
propagating modes. We define the global vector of impedance data coefficients as the concatenation 
of the coefficients from each port:
$\vec{\hat{f}} = [\vec{\hat{f}}^{1}; \vec{\hat{f}}^{2}; \ldots \vec{\hat{f}}^{P}]$ where each block 
is $\vec{\hat{f}}^{p} = [\hat{f}^{p}_{1}; \ldots \hat{f}^{p}_{M_{p}}]$. The vector of unknown outgoing 
propagating mode coefficients, $\vec{c}_{+}$, is structured in the same manner. 
Then the vectors for the impedance data ($\vec{\hat{f}}$), the unknown outgoing propagating mode 
coefficients ($\vec{c}_{+}$), and the incoming mode coefficients  ($\vec{c}_{-}$) satisfy the system of equations
\begin{equation}
\label{eq:final_i2i_glue}
\begin{bmatrix}
D_{+} & -I \\
D_{-} & -\Strunc
\end{bmatrix}
\begin{bmatrix}
\vec{c}_{+} \\
\vec{\hat{f}}
\end{bmatrix}
= \begin{bmatrix}
D_{-}\vec{c}_{-} \\ 
D_{+}\vec{c}_{-}
\end{bmatrix} \,, 
\end{equation}
where $D_{+}$ and $D_{-}$ are diagonal matrices 
with $\diag{D_{\pm}} = i[(\vec{\beta}^{1}\pm \eta); (\vec{\beta}^{2}\pm \eta); \ldots (\vec{\beta}^{P}\pm \eta)]$. 

In summary, our method reduces the waveguide problem~\eqref{eq:piece_out}, \eqref{eq:piece_helm} 
and~\eqref{eq:piece_jump} to a two-stage process. First, the matrix $\Strunc$ is computed to characterize 
the interior domain $\Omega_0$. Second, the complete solution is found by solving the linear 
system~\eqref{eq:final_i2i_glue} with an auxiliary unknown vector $\vec{\hat{f}}$. We note that solution 
to the linear system~\eqref{eq:final_i2i_glue} is only an 
approximation to~\eqref{eq:piece_out}, \eqref{eq:piece_helm}, and~\eqref{eq:piece_jump}, 
since the $O(\varepsilon)$ evanescent terms in $u_{p}$ were ignored in the derivation of~\eqref{eq:final_i2i_glue}.

\begin{theorem}\label{lem:i2i_inv} 
Suppose~$\eta$ in~\eqref{eq:comp_IBVP} satisfies~$\textrm{Re}(\eta)<0$. For any right-hand side, 
there exists a unique solution to the linear system~\eqref{eq:final_i2i_glue}.
\end{theorem}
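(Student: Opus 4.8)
\section*{Proof proposal}

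The plan is to exploit that \eqref{eq:final_i2i_glue} is a \emph{square}, finite-dimensional linear system --- the unknown blocks $\vec{c}_{+}$ and $\vec{\hat f}$ each have $\sum_{p} M_p$ entries --- so it suffices to prove that its kernel is trivial; existence for every right-hand side is then automatic. Accordingly, suppose $(\vec{c}_{+},\vec{\hat f})$ solves the homogeneous system, that is, $\vec{\hat f}=D_{+}\vec{c}_{+}$ and $\Strunc\vec{\hat f}=D_{-}\vec{c}_{+}$, and the goal is to show $\vec{c}_{+}=0$.

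First I would convert these algebraic data back into a PDE solution. Read the entries of $\vec{\hat f}$ as the sine coefficients of purely propagating impedance data $f_{p}=\sum_{m=1}^{M_p}\hat f^{p}_{m}b^{p}_{m}$ on $\gamma_p$, and let $v_{0}\in H^1(\Omega_0)$ be the corresponding solution of \eqref{eq:comp_IBVP}, which exists and is unique because $\textrm{Re}(\eta)\neq 0$. Put $g_{p}=\partial_n v_0-i\eta v_0$. By the definition $\Strunc=\Pout\Smat\Pout$, the propagating part of $g_p$ is exactly $\Strunc\vec{\hat f}$, so $\hat g^{p}_{m}=i(\beta^{p}_{m}-\eta)c^{p}_{m,+}$ for $m\le M_p$, while for $m>M_p$ the coefficient $\hat g^{p}_{m}$ is the (a priori unknown) evanescent content generated by the interior solve. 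On $\gamma_p$ one has $v_0=(f_p-g_p)/(2i\eta)$ and $\partial_n v_0=(f_p+g_p)/2$; extracting sine coefficients gives, for $m\le M_p$, value coefficient $c^{p}_{m,+}$ and normal-derivative coefficient $i\beta^{p}_{m}c^{p}_{m,+}$ --- so the propagating part of $v_0$ near $\gamma_p$ indeed looks outgoing --- whereas for $m>M_p$ (where $\hat f^{p}_{m}=0$) the value coefficient is $-\hat g^{p}_{m}/(2i\eta)$ and the derivative coefficient is $\hat g^{p}_{m}/2$, which is \emph{not} an outgoing relation unless $\eta=-\beta^{p}_{m}$. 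This mismatch in the evanescent modes is precisely why Theorem~\ref{thm:uniq_prop} cannot be invoked directly, and it is where $\textrm{Re}(\eta)<0$ must do the work.

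Next I would apply Lemma~\ref{lem:opt_thm} to $v_0$. Since $v_0=0$ on $\Gamma$, the identity collapses to $\textrm{Im}\sum_{p}\int_{\gamma_p}v_0\,\conj{\partial_n v_0}\,ds=0$, and by orthonormality of $\{b^{p}_{m}\}$ this equals $\textrm{Im}\sum_p\sum_m \hat v^{p}_{m}\conj{\hat w^{p}_{m}}$, where $\hat v^{p}_{m},\hat w^{p}_{m}$ are the value and derivative coefficients above (the series converges absolutely, being a pairing of an $H^{1/2}$ sequence against an $H^{-1/2}$ sequence). A one-line computation with the relations of the previous paragraph gives $\textrm{Im}(\hat v^{p}_{m}\conj{\hat w^{p}_{m}})=-\beta^{p}_{m}|c^{p}_{m,+}|^2$ when $m\le M_p$ and $\textrm{Im}(\hat v^{p}_{m}\conj{\hat w^{p}_{m}})=\frac{\textrm{Re}(\eta)}{4|\eta|^2}|\hat g^{p}_{m}|^2$ when $m>M_p$, so that
\[
\sum_{p=1}^{P}\sum_{m=1}^{M_p}\beta^{p}_{m}\,|c^{p}_{m,+}|^2 \;=\; \frac{\textrm{Re}(\eta)}{4|\eta|^2}\sum_{p=1}^{P}\sum_{m>M_p}|\hat g^{p}_{m}|^2 .
\]
The left-hand side is nonnegative because $\beta^{p}_{m}>0$ for propagating modes, and the right-hand side is nonpositive because $\textrm{Re}(\eta)<0$; hence both vanish, so $c^{p}_{m,+}=0$ for all $m\le M_p$ and all $p$. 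Therefore $\vec{c}_{+}=0$, and then $\vec{\hat f}=D_{+}\vec{c}_{+}=0$, so the kernel of \eqref{eq:final_i2i_glue} is trivial and the theorem follows.

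I expect the main obstacle to be careful bookkeeping rather than a deep difficulty: one must verify that the evanescent tail of $v_0$ contributes to the optical-theorem identity a term whose sign is controlled by $\textrm{Re}(\eta)$, and that every modal series appearing converges in the correct weighted-$\ell^2$ / duality sense given only $v_0\in H^1(\Omega_0)$ and $g_p\in H^{-1/2}(\gamma_p)$ --- in particular that $v_0|_{\gamma_p}$ and $\partial_n v_0|_{\gamma_p}$ admit sine expansions, which follows from the homogeneous Dirichlet condition on the channel walls as already noted for $g_p$. A small point worth recording is that no invertibility of $D_{\pm}$ is used: one concludes $\vec{c}_{+}=0$ first and recovers $\vec{\hat f}=0$ second, which is why the statement covers every $\eta$ with $\textrm{Re}(\eta)<0$, including the real values $\eta=-\beta^{p}_{m}$ at which $D_{+}$ is singular.
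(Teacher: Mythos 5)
Your proposal is correct and follows essentially the same route as the paper's proof: reduce to triviality of the kernel, realize the homogeneous algebraic data as impedance data for a solution $w_0$ of~\eqref{eq:comp_IBVP}, and apply Lemma~\ref{lem:opt_thm} to obtain a sign-definite identity in which the propagating terms contribute $-\beta^p_m|c^p_{m,+}|^2$ and the evanescent tail contributes a term proportional to $\textrm{Re}(\eta)$. Your evanescent coefficient $\textrm{Re}(\eta)/(4|\eta|^2)$ is in fact the cleaner form of the constant appearing in the paper's final display, and your remark that invertibility of $D_{+}$ is never needed is a worthwhile (if minor) addition.
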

\begin{proof}
Since \eqref{eq:final_i2i_glue} is a finite linear system, the uniqueness implies the existence. 
Thus, we only need to show the uniqueness, that is, if $\vec{c}_{-}$ is a zero vector, then 
both  $\vec{c}_{+}$ and  $\vec{\hat{f}}$ have to be the zero vectors.
Suppose that $[\vec{c}_{+},\vec{\hat{f}}]$ is a null vector of \eqref{eq:final_i2i_glue}. 
Suppose that $w_{0}$ is a solution to~\eqref{eq:comp_IBVP} with data
\begin{equation}
\frac{\partial w_{0}}{\partial n} + i\eta w_{0} \bigg|_{\gamma_{p}} = \sum_{m=1}^{M_{p}} \hat{f}^{p}_{m} b_{m}(y_{p}) \,. 
\end{equation}
The first block row of equations in~\eqref{eq:final_i2i_glue} imply that
\begin{equation}
\label{eq:datam}
\frac{\partial w_{0}}{\partial n} + i\eta w_{0} \bigg|_{\gamma_{p}} = \sum_{m=1}^{M_{p}} \hat{f}^{p}_{m} b_{m}(y_{p}) = \sum_{m=1}^{M_{p}} i(\beta_{m}^{p} + \eta) c_{m,+}^{p} b_{m}^{p}(y_{p}) \, .
\end{equation}
There exists a unique solution $w_{0}$ to~\eqref{eq:comp_IBVP} with this data, 
and let $a_{m}^{p}$ denote the coefficients of the outgoing impedance data, i.e.,
\begin{equation}
\frac{\partial w_{0}}{\partial n} - i\eta w_{0} \bigg|_{\gamma_{p}} = \sum_{m=1}^{\infty} a_{m}^{p} b_{m}^{p}(y_{p}) \,.
\end{equation}
Using the second block row of~\eqref{eq:final_i2i_glue}, it follows that 
\begin{equation}
a_{m}^{p} = i (\beta_{m}^{p} -\eta) c_{m,+}^{p}  \, ,\quad
m=1,2\ldots M_{p},\,p=1,2,\ldots P\,.
\end{equation}
and thus
\begin{equation}
\label{eq:datap}
\frac{\partial w_{0}}{\partial n} - i\eta w_{0} \bigg|_{\gamma_{p}} = \sum_{m=1}^{M_{p}} i(\beta_{m}^{p} -\eta) c_{m,+}^{p} b_{m}^{p}(y_{p}) + \sum_{m=M_{p}+1}^{\infty} a_{m}^{p} b_{m}^{p}(y_{p}) \,.
\end{equation}
Combining~\eqref{eq:datam} and~\eqref{eq:datap}, we get
\begin{equation}
\begin{aligned}
w_{0}|_{\gamma_{p}} &= \sum_{m=1}^{M_{p}} c_{m,+}^{p} b_{m}^{p}(y_{p}) - \sum_{m=M_{p}+1}^{\infty} \frac{a_{m}^{p}}{2i\eta} b_{m}^{p}(y_{p})  \\
\frac{\partial w_{0}}{\partial n}\bigg|_{\gamma_{p}} &=\sum_{m=1}^{M_{p}} i\beta_{m}^{p} c_{m,+}^{p} b_{m}^{p}(y_{p}) + \sum_{m=M_{p}+1}^{\infty} \frac{a_{m}^{p}}{2} b_{m}^{p}(y_{p})
\end{aligned}
\end{equation}
$p=1,2,\ldots P$.
Finally, applying Lemma~\ref{lem:opt_thm} to $w_{0}$ yields
\begin{equation}
\begin{aligned}
    0 = \textrm{Im}\int_{\partial \Omega_{0}} w_{0} \frac{\partial \overline{w_{0}}}{\partial n}
    &= \textrm{Im}\sum_{p=1}^{P} \int_{\gamma_{p}} w_{0} \frac{\partial \overline{w_{0}}}{\partial n} \\
    &= \textrm{Im}\sum_{p=1}^{P} \left(\sum_{m=1}^{M_{p}} c_{m,+}^{p} \cdot \overline{i\beta_{m}^{p} c_{m,+}^{p}}  - \sum_{m=M_{p}+1}^{\infty} \frac{a_{m}^{p}}{2i\eta} \cdot \frac{\overline{a_{m}^{p}}}{2}\right)\\
    &= -\sum_{p=1}^{P} \left(\sum_{m=1}^{M_{p}}|c_{m,+}^{p}|^2 - \frac{1}{4\textrm{Re}(\eta)}\sum_{m=M_{p}+1}^{\infty} |a_{m}^{p}|^2 \right) \,.
\end{aligned}
\end{equation}
Since~$\textrm{Re}(\eta)<0$, the result follows.
\end{proof}

\begin{remark}
From~\eqref{eq:final_i2i_glue}, it is clear that if~$\Strunc$ is known, the scattering matrix 
can be computed as follows
\begin{equation}\label{eq:scattering_mat}
    \begin{bmatrix}
        \cS\\
        \cF
    \end{bmatrix} = \begin{bmatrix}
D_{+} & -I \\
D_{-} & -\Strunc
\end{bmatrix}^{-1} \begin{bmatrix}
D_{-} \\ 
D_{+}
\end{bmatrix},
\end{equation}
where~$\cF$ is the map from~$\vec{c}_-$ to the impedance data~$\vec{\hat{f}}$. 
\end{remark}

\begin{remark}
In order for the scattering matrix computed via \eqref{eq:scattering_mat} to be close to its true value 
for the original problem, $u_p$ should be well approximated by the outgoing propagating 
modes ~$\sum_{m=1}^{M_p}c_{+,m}^p e^{i\beta_m^px_p}b_m^p(y_p)$. This can always be achieved 
because we can place the external ports at a truncation distance~$L$ in Fig.~\ref{fig:domain_def} large 
enough from the interacting regime that all evanescent modes~$e^{-i\beta_{m}^px_p}b_{m}^p(y_p)$ ($m> M_p$) 
have decayed to below a given tolerance. 
\end{remark}

\subsection[Computing impedance to impedance maps via domain decomposition]{Computing $\Strunc$ via domain decomposition}\label{sec:merge2}
For simple geometries, one can find $\Strunc$ by directly discretizing and solving~\eqref{eq:comp_IBVP}. 
However, the computational cost of solving~\eqref{eq:comp_IBVP} grows rapidly with the size of the waveguide circuit.

It is possible to accelerate the construction of $\Strunc$ based on a domain decomposition approach. This 
can be done by partitioning the truncated domains into many modular components, constructing the 
impedance-to-impedance operators for the individual components, and imposing continuity of $u$ 
and $\frac{\partial u}{\partial n}$ at the common edges of the individual components. 

To illustrate this approach, suppose that the region $\Omega_{0}$ with two ports $\gamma_{1}, \gamma_{2}$ 
is partitioned into two components $\tilde\Omega_{1}$ and $\tilde\Omega_{2}$, 
i.e., $\overline{\Omega_{0}} = \overline{\tilde\Omega_{1}} \cup \overline{\tilde\Omega_{2}}$, with both 
components having one port each. Let $\gamma_{12}$ denote their common boundary, 
i.e., $\gamma_{12} = \partial \tilde \Omega_{1} \cap \partial\tilde \Omega_{2}$, see 
Figure~\ref{fig:partition}. We assume that the normal to $\gamma_{12}$ is pointing away 
from $\tilde \Omega_{1}$. Let $\Strunc_{j}$ denote the impedance to impedance operators 
for $\tilde \Omega_{j}$, $j=1,2$ which we write in the following block $2\times 2$ form corresponding 
to impedance data on $\gamma_{1}$ and $\gamma_{12}$ for $\tilde \Omega_{1}$, and $\gamma_{2}$ 
and $\gamma_{12}$ for $\tilde \Omega_{2}$, i.e.,
\begin{equation}
\Strunc_{1} = \begin{bmatrix}
A^{(1)}_{\gamma_{1}, \gamma_{1}} & A^{(1)}_{\gamma_{1}, \gamma_{12}} \\
A^{(1)}_{\gamma_{12}, \gamma_{1}} & A^{(1)}_{\gamma_{12}, \gamma_{12}} 
\end{bmatrix} \, , \quad 
\Strunc_{2} = \begin{bmatrix}
A^{(2)}_{\gamma_{2}, \gamma_{2}} & A^{(2)}_{\gamma_{2}, \gamma_{12}} \\
A^{(2)}_{\gamma_{12}, \gamma_{2}} & A^{(2)}_{\gamma_{12}, \gamma_{12}} 
\end{bmatrix} \, .
\end{equation}
\begin{figure}
\centering
\includegraphics[width=\linewidth]{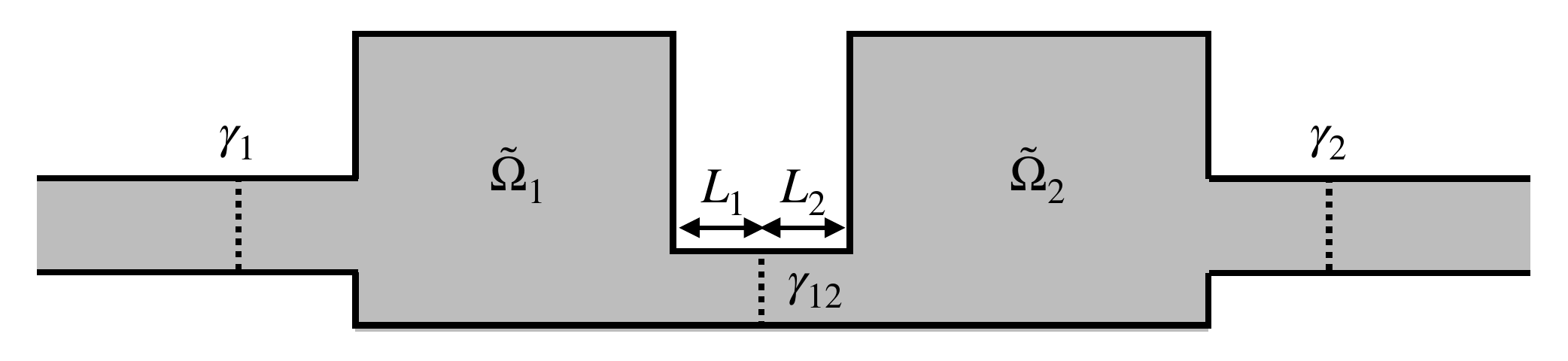}
\caption{An example of a component divided into components~$\tilde\Omega_1$ 
and~$\tilde\Omega_2$ by the curve~$\gamma_{12}$. The figure also shows the 
definition of the lengths~$L_1$ and~$L_2$.}
\label{fig:partition}
\end{figure}
Given the impedance data vectors
$\vec{\hat{f}} = [\vec{\hat{f}}^{1}; \vec{\hat{f}}^{2}]$
and $\vec{\hat{g}} = [\vec{\hat{g}}^{1}; \vec{\hat{g}}^{2}]$
on $\gamma_1$ and $\gamma_2$, our goal is to find the matrix  $\Strunc$
such that $\vec{\hat{f}} = \Strunc\vec{\hat{g}}$.  Let $h_{\pm} =  \lp\frac{\partial u}{\partial n} \pm i\eta u\rp \big|_{\gamma_{12}}$ denote the impedance data on the interface $\gamma_{12}$ and $\hat{h}_{\pm}$ are the coefficient vectors of $h_{\pm}$ in the modal basis inherited from~$\partial\tilde\Omega_1$. Then by definition, we have 
\begin{equation}
\begin{bmatrix} \vec{\hat{g}}^{1} \\
\hat{h}_{-}
\end{bmatrix}
= \Strunc_{1} \begin{bmatrix}
\vec{\hat{f}}^{1} \\ 
\hat{h}_{+}
\end{bmatrix} \, ,
\quad
\begin{bmatrix}
\vec{\hat{g}}^{2} \\
- D \hat{h}_{+}
\end{bmatrix} = 
\Strunc_{2} \begin{bmatrix}
\vec{\hat{f}}^{2} \\
-D\hat{h}_{-}
\end{bmatrix} \, ,
\end{equation}
where $D$ is the diagonal matrix with entries~$\pm 1$ depending on the symmetry of the corresponding basis function.
Note that negative signs in the above equation relating to $\Strunc_{2}$ account for the fact that 
the normal to $\gamma_{12}$ points into $\tilde\Omega_{2}$. 
These equations can be combined to give the following block system
\begin{equation} \label{eq:merge_sys}
    \begin{bmatrix}
        A^{(1)}_{\gamma_{1}, \gamma_{1}}  & 0 & A^{(1)}_{\gamma_{1}, \gamma_{12}} & 0 \\
    0 &A^{(2)}_{\gamma_{2}, \gamma_{2}} &  0& -A^{(2)}_{\gamma_{2}, \gamma_{12}} \\
        A^{(1)}_{\gamma_{12}, \gamma_{1}} & 0 & A^{(1)}_{\gamma_{12}, \gamma_{12}} & -I\\
        0 & A^{(2)}_{\gamma_{12}, \gamma_{2}} & D &  -A^{(2)}_{\gamma_{12}, \gamma_{12}} D
    \end{bmatrix}\begin{bmatrix}
        \vec{\hat{f}}^{1}\\\vec{\hat{f}}^{2}\\ \hat{h}_+ \\ \hat{h}_-
    \end{bmatrix}=\begin{bmatrix}
        \vec{\hat{g}}^{1}\\
        \vec{\hat{g}}^{2}\\0\\ 0
    \end{bmatrix}.
\end{equation}

We can use a Schur complement to eliminate $\hat{h}_{\pm}$ and find
\begin{equation}
\begin{bmatrix}
\vec{\hat{g}}^{1}\\
\vec{\hat{g}}^{2}
\end{bmatrix} = \Strunc
\begin{bmatrix}
\vec{\hat{f}}^{1}\\
\vec{\hat{f}}^{2}
\end{bmatrix} \,,
\end{equation}
where
\begin{equation}\label{eq:merged_S}
\begin{aligned}
\Strunc &= \begin{bmatrix}
A_{\gamma_{1}, \gamma_{1}}^{(1)} & 0 \\
0 & A_{\gamma_{2}, \gamma_{2}}^{(2)}
\end{bmatrix} \\
&- 
\begin{bmatrix}
A^{(1)}_{\gamma_{1}, \gamma_{12}} & 0 \\
0 & -A^{(2)}_{\gamma_{2}, \gamma_{12}}
\end{bmatrix}
\begin{bmatrix} 
A^{(1)}_{\gamma_{12}, \gamma_{12}} & -I \\
D & -A^{(2)}_{\gamma_{12}, \gamma_{12}}D
\end{bmatrix} ^{-1}
\begin{bmatrix}
A^{(1)}_{\gamma_{12}, \gamma_{1}} & 0 \\
0 & A^{(2)}_{\gamma_{12}, \gamma_{2}} 
\end{bmatrix}
\, .
\end{aligned}
\end{equation}

\subsection{Selection of modes at the interface}\label{sec:merge_acc}
The divide-and-conquer scheme, using truncated impedance-to-impedance
maps, can effectively handle closely connected sub-components. When
sub-components are close, retaining only propagating modes at the
interface $\gamma_{12}$ results in a significant loss of accuracy.
It is therefore necessary to include a sufficient number of evanescent
modes, $b_{m}^{12}$ for $m>M_{12}$, in the impedance-to-impedance
maps $\Strunc_1$ and $\Strunc_2$. These modes, which would decay
in a semi-infinite port, are essential for resolving near-field interactions.
The merging formulas~\eqref{eq:merge_sys}--\eqref{eq:merged_S}
in Section~\ref{sec:merge2} apply directly to this expanded modal
basis. If these were ports instead of continuing to another subdomain,
these modes would correspond to evanescent modes which would decay
exponentially away from the interface $\gamma_{12}$. Based on this
heuristic, we expect the error in~\eqref{eq:merged_S} to decay
like
\begin{equation}
    e^{i\beta_{M+1}^{12} \min(L_1,L_2)} \sim  e^{-\frac{\pi}{d_{12}}\min(L_1,L_2) M},
\end{equation}
where~$M \ge M_{12}$ is the total number of modes included in~$\hat{h}_{\pm}$.  To verify the merging 
formula and the heuristic above, consider the domain in~\figref{fig:merge_soln},
which is constructed from two simple components with $M_{12}=1$. In~\figref{fig:merge_err}, 
we plot the error as a function of the channel length, $L$, i.e., the length of the rectangular 
section between the two components, and the number of terms $M$ included in the representation 
on $\gamma_{12}$. For both Dirichlet and Neumann waveguides, the error decreases exponentially 
with respect to $L$, and the rate of this decay increases as more modes are added.

\begin{remark}
In \cite{sugita2023thesis}, a closely related approach is studied, with Dirichlet conditions 
imposed at the interfaces. As in \cite{wang2025scattering}, that work assumes the components 
are connected by sufficiently long straight channels so that evanescent modes decay below a prescribed 
tolerance and can be neglected at the interface.
\end{remark}

\begin{figure}
    \centering
    \includegraphics[width=0.5\linewidth]{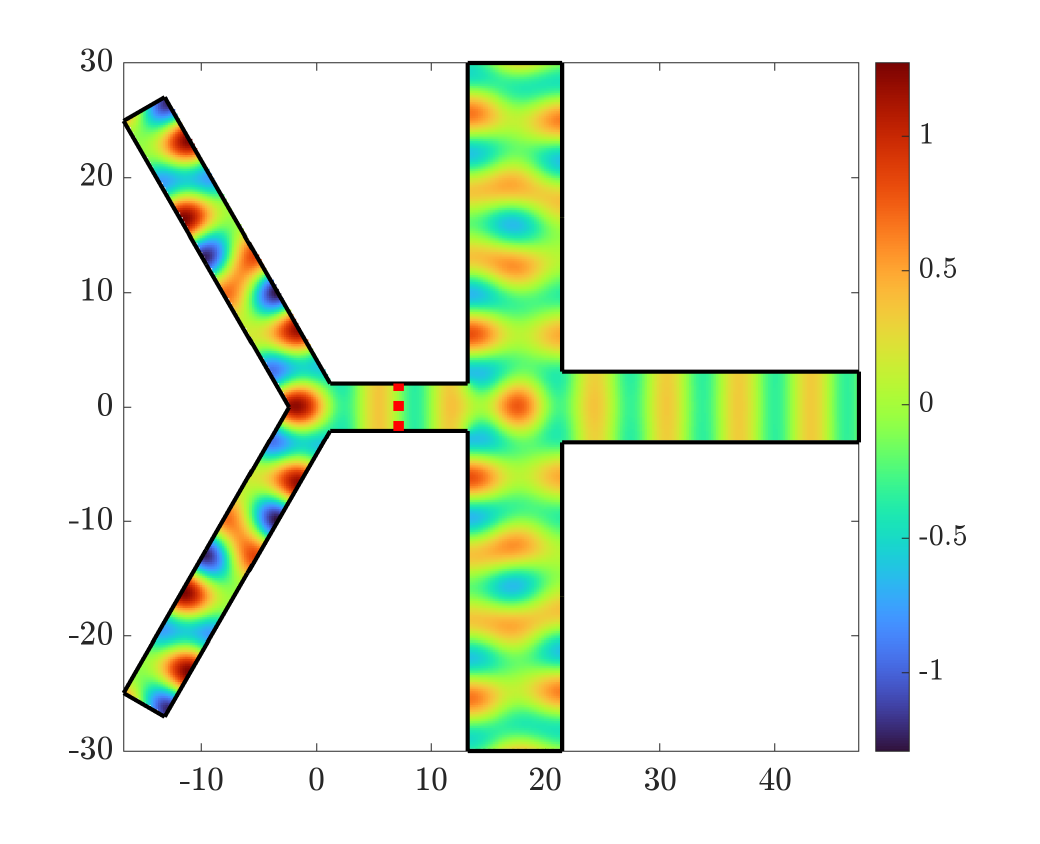}
    \caption{The real part of an example solution in geometry used to test the 
    accuracy of~\eqref{eq:merged_S}. 
    The red dashed line indicates the line separating the components, $\gamma_{12}$.}
    \label{fig:merge_soln}
\end{figure}

\begin{figure}
    \centering
    \includegraphics[width=0.9\linewidth]{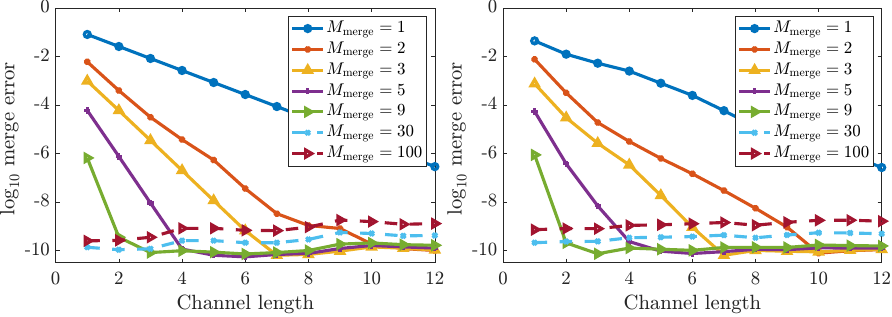}
    \caption{Error in merging impedance-to-impedance maps of two sub-domains separated by channel length $L$. 
    The left and right figures show the errors for Dirichlet and Neumann boundary conditions respectively.}
    \label{fig:merge_err}
\end{figure}

\subsection{A graph-based method for merging multiple  components}\label{sec:more_comp}
If the region~$\Omega_0$ is partitioned into the subcomponents~$\tilde\Omega_1,\ldots \tilde\Omega_N$, we can 
extend the merging formulas in Section~\ref{sec:merge2} to build~$\Strunc$ for the whole domain. In this case, 
we can think of the subcomponents as nodes on a graph. The nodes~$\tilde\Omega_j$ and~$\tilde\Omega_l$ are 
said to be connected by an edge if~$\gamma_{jl}=\partial\tilde\Omega_j\cap \partial\tilde\Omega_l$ is nonempty. 
If the local impedance-to-impedance maps~$\Strunc_j$ are known, we can construct~$\Strunc$ through an analogue 
of~\eqref{eq:merge_sys} and \eqref{eq:merged_S}.

To do this, we introduce the auxiliary coefficient vectors~$\hat{h}^{j,l}_{\pm}$ which represent the 
impedance data $\partial_{n_j} u \pm i\eta u\big|_{\gamma_{jl}}$ on the interfaces separating~$\tilde \Omega_j$ 
and~$\tilde \Omega_l$. We then construct a sparse block system of the form~\eqref{eq:merge_sys}. If the 
node~$\tilde\Omega_j$ is connected to~$P_j$ other nodes, then its truncated impedance-to-impedance 
map~$\Strunc_j$ will have~$P_j \times P_j$ blocks. The nonzero blocks in the row of the sparse block 
system corresponding to~$\hat{h}^{j,l}_{\pm}$ will consist of~$P_j$ sub-blocks of $\Strunc_j$ and one 
diagonal matrix. If the external port~$\Omega_p$ is connected to the component~$\tilde\Omega_{j_p}$, then 
the row corresponding to the impedance data~$\vec{\hat{f}}^p$ will have~$P_{j_p}$ non-zero blocks.
Matrices with this graph-based sparsity patterns have been well studied and there exist many algorithms 
capable of efficiently solving such systems, see \cite{davis2004algorithm,zhu2025recursive}, for example. 
In practice the number of components and the number of modes per edge are small enough that the system 
can be solved quickly using any sparse matrix solver. 

\begin{remark}
Some waveguide circuits may include long straight segments. If these are long enough, then the 
computational cost of discretizing these segments can be prohibitively expensive. To avoid this problem, 
we can replace the center of this segment by an extra circuit element. For this circuit element the 
impedance-to-impedance map can be found analytically, which removes the need to discretize these long segments.
\end{remark}

\section{Computing impedance maps via boundary integral equations}\label{sec:bie}
We use a boundary integral equation (BIE) formulation to solve the
BVP~\eqref{eq:comp_IBVP} and compute the truncated impedance-to-impedance
map for each sub-component. Truncating the computational domains
for waveguide systems typically results in many right-angled corners.
It is well-known that solutions to boundary integral equations can
develop singularities in the vicinity of these corners and at junctions
with mixed boundary conditions. A related issue when solving integral
equations with mixed boundary conditions is that using standard representations
associated with each boundary condition restricted to the relevant
boundaries requires evaluation of nearly hypersingular integrals
and the resulting integral operators may not be Fredholm. 

To illustrate this issue, consider the case of Dirichlet waveguides. Let~$G_k(r)=i H_0^{(1)}(kr)/4$ 
denote the fundamental solution to the 2D Helmholtz equation, where $H_0^{(1)}$
is the zeroth-order Hankel function of the first kind. Let $\cS_{\tilde\gamma}$ and $\cD_{\tilde\gamma}$ 
denote the single and double layer operators given by
\begin{equation}
      \cS_{\tilde\gamma}[\sigma](\bm{x})
  := \int_{\tilde\gamma} G_{k}(\bm{x},\bm{y}) \sigma(\bm{y}) ds_{\bm{y}}
      \quad\text{and}\quad 
      \cD_{\tilde\gamma}[\sigma](\bm{x})
  := \int_{\tilde\gamma} \PD{G_k}{n_{y}}(\bm{x},\bm{y}) \sigma(\bm{y}) dS_{\bm{y}} \,.
\end{equation}
Here~$\tilde\gamma$ is any curve in~$\bbR^2$. While the standard method for an interior Dirichlet 
problem uses a combined field representation $\cD + ik\cS/2$, the interior impedance problem is 
typically solved using only the single layer representation. Thus, it might be tempting to solve 
the mixed boundary value problem by representing the solution as
\begin{equation}
    u = 2\cS_{\cup_p\gamma_p}[\mu]   -2\lp\cD_{\Gamma} + \frac{ik}{2} \cS_{\Gamma}\rp [\sigma] ,
\end{equation}
where the constants $\pm 2$ are chosen such that the jumps on the boundary lead to the identity matrix. 
Unfortunately, this representation will be hypersingular due to the mixed boundary conditions and 
the corner between~$\Gamma$ and~$\gamma_p$. 

Instead, we take inspiration from the method of images and let~$\Gamma_r$ be the portion of~$\Gamma$ 
within a distance~$r$ of~$\gamma_p$. 
Let $\tilde\Gamma_r$ be the reflection of each piece of $\Gamma_r$ about the closest $\gamma_p$,
and let $\mathcal R:\Gamma_r\to\tilde\Gamma_r$ denote this reflection.
Set $\sigma_r := \restr{\sigma}{\Gamma_r}$ and
$\tilde\sigma_r := \sigma_r\circ\mathcal R^{-1}$ on $\tilde\Gamma_r$.
In order to remove the dominant corner singularity, we add the layer potential
$\cD_{\tilde\Gamma_r}$ with density $\tilde\sigma_r$ to our representation:
\begin{equation}\label{eq:u_rep}
    u = 2\cS_{\cup_p\gamma_p}[\mu]   -2\lp\cD_{\Gamma} + \frac{ik}{2} \cS_{\Gamma}\rp [\sigma] -2\cD_{\tilde\Gamma_r}[\tilde\sigma_r] \,.
\end{equation}
This representation is analogous to using the half-space Dirichlet double layer potential for the 
Helmholtz equation on $\Gamma_{r}$ instead of $\cD_{k}$.

The standard jump relations and symmetry of $\Gamma_r$ and~$\tilde\Gamma_r$ give that~$u$ 
solves~\eqref{eq:comp_IBVP} provided~$\sigma$ and~$\mu$ solve
\begin{equation}\label{eq:CFIE}
    \begin{split}
\mu + (\partial_{\bs n} +i\eta)\lp 2\cS_{\cup_p\gamma_p}[\mu]  -2\lp\cD_{\Gamma} + \frac{ik}{2} \cS_{\Gamma}\rp [\sigma]-2\cD_{\tilde\Gamma_r}[\tilde\sigma_r]\rp =f &\text{ on } \cup_p\gamma_p \\
        \sigma + 2\cS_{\cup_p\gamma_p}[ \mu]  -2\lp\cD_{\Gamma} + \frac{ik}{2} \cS_{\Gamma}\rp [\sigma] -2\cD_{\tilde\Gamma_r}[\tilde\sigma_r]=0 &\text{ on } \Gamma
    \end{split}
\end{equation}

\begin{figure}
    \centering
    \includegraphics[width=0.9\linewidth]{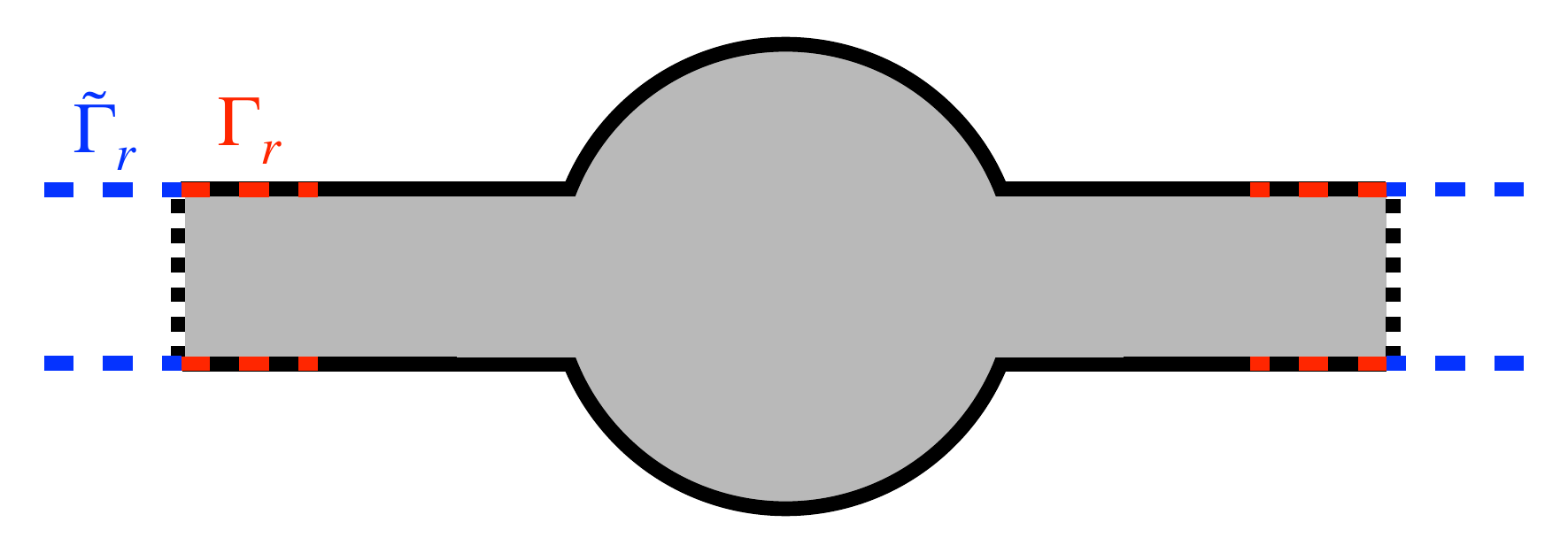}
    \caption{An illustration of our image curves. 
    The dashed red lines indicate the portion of~$\Gamma$ within a distance~$r$ of~$\gamma$ and 
    the dashed blue lines are the reflection of those segments about the line segments~$\gamma_p$.}
    \label{fig:image_def}
\end{figure}

\begin{theorem}
    If~$\Gamma$ is piecewise smooth and Lipschitz and~$|\eta| <(\sqrt{2}-1)/2$, then the integral operator 
    in~\eqref{eq:CFIE} is Fredholm index zero.
\end{theorem}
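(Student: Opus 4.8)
The plan is to establish both Fredholmness and index zero by the standard localization-plus-Mellin-symbol analysis at the corners and junctions of $\partial\Omega_0$, the only delicate point being the model operator at the mixed Dirichlet--impedance junctions, where the fictitious boundary of~\eqref{eq:u_rep} does its work. First I would write the operator in~\eqref{eq:CFIE} as $I+\mathcal K$ on the natural product Sobolev space of $(\mu,\sigma)$ over $(\cup_p\gamma_p)\times\Gamma$, and introduce a partition of unity subordinate to (i) small disjoint neighborhoods of the mixed junctions $\gamma_p\cap\Gamma$, (ii) small neighborhoods of the remaining corners of $\Gamma$, and (iii) the complement. On region (iii), and on every block coupling distinct ports or coupling $\gamma_p$ to the far part of $\Gamma$, each kernel occurring in $\mathcal K$ --- $\cS$, $\partial_n\cS$, $\cD$, $\cD_{\tilde\Gamma_r}$, and the normal derivative $\partial_n\cD_\Gamma$ evaluated on $\gamma_p$ --- is at most weakly singular and hence defines a compact operator. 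By the usual localization principle for integral operators on piecewise-smooth curves (see, e.g.,~\cite{mclean2000}), $I+\mathcal K$ is then Fredholm of index zero iff each localized model operator is locally Fredholm, the global index being the sum of the local indices (the compact remainder contributing nothing). At a corner of $\Gamma$ not touching any $\gamma_p$ there is no image curve, $\cS_\Gamma$ and $\partial_n\cS_\Gamma$ are compact, and the model reduces --- up to compact terms and after replacing $G_k$ by the Laplace kernel $G_0$ --- to the Laplace double-layer operator on a Lipschitz wedge, which is Fredholm of index zero for every opening angle in $(0,2\pi)$, in particular for the right angles produced by truncation, by classical Mellin symbol estimates.

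The heart of the argument is the model operator at a mixed junction $\gamma_p\cap\Gamma$. There the segments $\gamma_p$ and $\Gamma$ are perpendicular rays and $\tilde\Gamma_r$ is the reflection of the near part of $\Gamma$ across $\gamma_p$, so $\Gamma\cup\tilde\Gamma_r$ is locally a straight line through the corner; after freezing coefficients and rescaling, the model is a matrix operator on $L^2(\bbR_+)^2$. This is where the image term is essential: by the method of images on the right-angle wedge, $\cD_\Gamma[\sigma]+\cD_{\tilde\Gamma_r}[\tilde\sigma_r]$ equals, to leading order at the corner, the double layer of the half-space Dirichlet Green's function, whose normal derivative on $\gamma_p$ is \emph{bounded} rather than carrying the $O(1/\mathrm{dist})$ Mellin singularity of $\partial_n\cD_\Gamma$ alone --- the unbounded part of the Mellin symbol cancels and the model becomes a bounded $2\times2$ matrix-valued Mellin (Wiener--Hopf) operator with an explicitly computable symbol $\widehat{\mathcal M}(\xi)$, $\xi\in\bbR$. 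I would then show $\widehat{\mathcal M}(\xi)$ is invertible with zero winding number for all $\xi$ under $|\eta|<(\sqrt2-1)/2$. The cleanest route is to write $\widehat{\mathcal M}(\xi)=I+\mathcal B(\xi)+\mathcal C(\xi)$, with $\mathcal C$ the genuinely compact ($\cS$-type and $G_k-G_0$) part and $\mathcal B$ collecting the remaining $\cD$-type and $\partial_n\cS$-type blocks; evaluating the relevant Poisson- and double-layer symbols at a $90^\circ$ corner, the diagonal contributions are controlled by the wedge double-layer bound while the off-diagonal (impedance-to-Dirichlet) blocks are proportional to $|\eta|$, and tracking these constants yields $\sup_\xi\|\mathcal B(\xi)\|<1$ exactly when $|\eta|<(\sqrt2-1)/2$, the $\sqrt2$ entering through the values of these symbols at the right angle. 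Then $I+\mathcal B$ is invertible, so $\widehat{\mathcal M}(\xi)$ is invertible for every $\xi$ and the model is Fredholm of index zero (equivalently, $\det\widehat{\mathcal M}$ is nonvanishing with zero winding).

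With every localized model Fredholm of index zero, the localization principle gives that the operator in~\eqref{eq:CFIE} is Fredholm of index zero, which is the claim. I expect the mixed-junction step to be the main obstacle: correctly assembling all the near-corner kernels into the $2\times2$ Mellin model, verifying that the image term removes the \emph{unbounded} hypersingular part of the symbol and not merely its pointwise value, and getting a bound on $\sup_\xi\|\mathcal B(\xi)\|$ sharp enough to produce the threshold $(\sqrt2-1)/2$. The most error-prone bookkeeping will be the signs --- the outward normals on $\Gamma$, $\gamma_p$, and $\tilde\Gamma_r$, together with the $\pm2$ normalization constants in~\eqref{eq:u_rep}.
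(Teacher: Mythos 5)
Your plan is essentially sound and rests on the same key mechanism as the paper's proof, but it takes a genuinely different and heavier route to the Fredholm conclusion. The shared core is the observation that adding the image potential $\cD_{\tilde\Gamma_r}[\tilde\sigma_r]$ removes the hypersingular behavior of $\partial_n\cD_\Gamma[\sigma]$ on $\gamma_p$; the paper dispatches this in one line by symmetry (the sum $\cD_{\Gamma_{r,p}}[\sigma]+\cD_{\tilde\Gamma_{r,p}}[\tilde\sigma_r]$ is even under reflection across $\gamma_p$, so its normal derivative is odd and vanishes there), which is the same fact you describe as cancellation of the unbounded part of the Mellin symbol. After that, however, the paper does not localize to model operators or compute symbols at all: it quotes the Shelepov--Wendland essential-norm estimate, by which the double layer on a curve with right-angle corners equals a compact operator plus an operator of $L^2$ norm at most $\tfrac{1}{2}\sin(\pi/4)+\epsilon=\tfrac{1}{2\sqrt 2}+\epsilon$ (likewise for $\cS'$), so the full system is identity plus compact plus a perturbation of norm $\tfrac{1}{\sqrt 2}(1+2|\eta|)+O(\epsilon)$, and a Neumann series yields invertible-plus-compact, hence Fredholm of index zero, exactly when $|\eta|<(\sqrt 2-1)/2$. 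Your localization-plus-Mellin approach would, if completed, be at least as strong and potentially sharper (invertibility and winding of the explicit $2\times 2$ symbol could certify Fredholmness for $\eta$ beyond any norm-based threshold), but as written the decisive quantitative step is asserted rather than proved: the claim that $\sup_\xi\|\mathcal B(\xi)\|<1$ holds precisely for $|\eta|<(\sqrt 2-1)/2$ requires the symbol computation you defer, and the fact that a pointwise-in-$\xi$ bound reproduces the paper's constant is exactly the content of the essential-norm result you would be re-deriving (the essential norm of the wedge double layer is the supremum of its Mellin symbol). So: same cancellation, a more refined framework in principle, but the paper reaches the stated threshold faster by citing the norm bound, while your version still owes the reader the explicit symbol estimates.
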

\begin{proof}
The only terms in \eqref{eq:CFIE} with potentially hypersingular behavior at the corner are
\begin{equation}\label{eq:hyper_dir}
    \partial_{\bs n} \lp-2\cD_\Gamma[\sigma] - 2\cD_{\tilde\Gamma_r}[\tilde\sigma_r]\rp.
\end{equation}
To analyze the singularity, we further split~$\Gamma_r$ and~$\tilde\Gamma_r$ into~$\Gamma_{r,p}$ 
and~$\tilde\Gamma_{r,p}$, their connected components touching~$\gamma_p$. 
Since $G_k$ is smooth away from the diagonal, the only potentially singular contribution in~\eqref{eq:hyper_dir} is
\begin{equation}
     \partial_{\bs n} \lp-2\cD_{\Gamma_{r,p}}[\sigma] - 2\cD_{\tilde\Gamma_{r,p}}[\tilde\sigma_r]\rp\big|_{\gamma_p}.
\end{equation}
By construction, the quantity in parentheses is even with respect to reflection across $\gamma_p$, 
so its normal derivative is odd and therefore vanishes on $\gamma_p$.
The remaining non-compact terms in~\eqref{eq:CFIE} are the double layer and the normal derivative of the 
single layer at the corners of~$\partial\Omega_0$. 

To prove the Fredholm structure, we note that it was proved in~\cite{shelepov1969index} 
(see~\cite{wendland2009double} for a summary) that for all~$\epsilon>0$ there exists a compact 
operator~$\tilde\cD$ such that
\begin{equation}
    \left\| \cD_{\Gamma \cup \lp\cup_p\gamma_p \rp} - \tilde \cD\right\|_{L^2} \leq \frac{1}2\sin \frac{\pi}{2\cdot 2} +\frac\epsilon2 = \frac{1}{2\sqrt{2}}+\frac\epsilon2\, .
\end{equation}
There also exists a compact operator~$\tilde\cS'$ such 
that~$ \| \cS_{\Gamma \cup \lp\cup_p\gamma_p \rp}' - \tilde \cS'\|_{L^2} \leq \frac{1}{2\sqrt{2}}+\frac\epsilon2$. 
Thus, after multiplying by some cutoff functions, the operator on the left hand side \eqref{eq:CFIE} can 
be written as an identity operator, plus a compact operator, plus an operator of norm~$1/\sqrt{2}+\epsilon$ 
and an operator with norm~$2\eta(1/\sqrt{2}+\epsilon)$. A Neumann series argument thus gives that the operator 
in~\eqref{eq:CFIE} is of the form invertible plus compact provided~$|\eta|< (\sqrt{2}-1)/2$.
\end{proof}

For the Neumann waveguides, the mixed boundary conditions are relatively easier to handle, 
as it can be reformulated as an impedance problem on the whole boundary but with a piecewise constant  
impedance function. Using a standard single layer representation (see \cite{Book-Kress-Colton-2013}) results 
in a Fredholm system of equations for the unknown densities in this case. 

\section{Numerical algorithm}\label{sec:alg}
\subsection{Discretization}
The boundary~$\Gamma$ is partitioned into 16th-order Gauss--Legendre
panels, and \eqref{eq:CFIE} (or its Neumann counterpart) is discretized
via a modified Nystr{\"o}m scheme. In this scheme, the unknown density~$\sigma$
is represented by its values at the quadrature nodes, and the equation
is enforced by collocation at those nodes.

To evaluate the integral operators in~\eqref{eq:u_rep} and~\eqref{eq:CFIE}
at a target~$\bx$, we split the boundary integral into contributions
from the self panel, panels near~$\bx$, and the remaining (far)
panels. For the singular self-panel and nearly singular near-panel
contributions we use generalized Gaussian quadrature~\cite{bremer2010nonlinear},
whereas the smooth far-panel contributions are evaluated with standard
Gauss–Legendre quadrature. We note that there are several other high-order
quadrature schemes for singular and nearly singular layer potentials,
including quadrature by expansion (QBX) and its variants, as well
as kernel-split methods; see, e.g.,~\cite{klockner2013jcp,helsing2008jcp}.
We treat corner singularities using recursively compressed inverse
preconditioning (RCIP)~\cite{JCP-2008-Helsing, RCIP-Tutorial, SISC-2018-Helsing-Jiang, JCP-2022-Helsing-Jiang}.
Although the layer-potential densities are singular at corners, RCIP
requires only quadrature rules that resolve the singularities of
the integral kernels.

\subsection{Algorithm and complexity analysis}
Let $\EPS>0$ denote the prescribed tolerance. We summarize our approach for computing $\mathcal{S}$ 
in Algorithm~\ref{alg:1}.
\begin{algorithm}[!htbp]
\caption{Fast computation of the scattering matrix $\mathcal{S}$}
\label{alg:1}
\mbox{}
\begin{enumerate}
\item Subdivide the domain~$\Omega_0$ into~$N$ components~$\tilde\Omega_i$ and let divide each channel 
connecting components halfway between adjacent components. 
\item Set the number of modes in port~$p$ of~$\tilde\Omega_i$, denoted by~$\widetilde M_{i,p}$, in each 
port to be the largest value such that~$\left|e^{i\beta_{\widetilde M_{i,p}}L_{i,p}}\right|>\EPS$.
\item Discretize each boundary~$\partial\tilde\Omega_i$, ensuring that there are enough points on 
each~$\gamma_{i,p}$ to resolve the mode~$b_{\widetilde M_{i,p}}$. Then construct the system matrix for~\eqref{eq:CFIE}, and its LU factorization.
\item Compute $\Strunc_i$ for each component~$\tilde\Omega_i$ by solving~\eqref{eq:comp_IBVP} once 
for each mode in each channel connected to the component. This step requires $\widetilde M_{i} = \sum_{p=1}^{P_i}\widetilde M_{i,p}$ solves.
\item Construct the linear system equivalent to~\eqref{eq:merge_sys} that enforces continuity of 
the solution and its normal derivative in every interface (see Section~\ref{sec:more_comp}) and use a sparse 
solver to find the Schur complement and compute the truncated impedance-to-impedance map for~$\Omega_0$ 
(see \eqref{eq:merged_S}).
\item Solve~\eqref{eq:scattering_mat} to compute the scattering matrix for the domain~$\Omega_0$.
\end{enumerate}
\end{algorithm}

In order to analyze the computational complexity of the algorithm,
for simplicity, suppose that each of the~$N$ components are distinct
and so the local impedance-to-impedance maps~$\Strunc_i$ can't
be reused. Suppose also that all~$\tilde\Omega_i$'s are comparable
in size and thus discretized using~$n$ points each. Finally, suppose
that $\widetilde M_{i}=\widetilde M$ is independent of $i$.

Under these assumptions, the most costly parts of the algorithm are step 3 with a cost of~$O(N n^3)$ and 
step 4 with a cost of~$O(N n^2 \widetilde M)$. The cost of building the linear system in step 5 
is~$O(N \widetilde M)$ and the cost of solving the system with a modern sparse solver such 
as~\cite{zhu2025recursive} will be~$O(N \widetilde M)$. In practice, $\widetilde M \ll n $, so that 
any sparse solver makes the cost of step 5 negligible compared to the cost of steps 3 and 4. Finally the 
cost of step 6 is~$O(1)$ and so the total cost of our algorithm is~$O(N n^3+N n^2 \widetilde M)$. 

Before proceeding, we discuss the criteria for identifying the components~$\tilde\Omega_i$. If two 
components are connected by a very short channel, then the number of relevant modes~$\widetilde M_{i,p}$ will 
be large for some~$p$. In the above, we noted that the cost of computing~$\Strunc_i$ for a single component 
will be $n^3+ n^2 \widetilde M.$ Thus, it is typically more efficient to treat adjacent subdomains as separate 
components unless the connecting channel is so short that 
$\widetilde M_{i,p}$ is comparable to the number of discretization nodes for either element. 

With this choice, each domain $\tilde\Omega_{i}$ is $O(1)$ wavelength in size, and for the examples considered 
in this work $n$ is $O(10^{3})$. For such small $n$, dense linear algebra for steps 3–4 typically outperforms 
FMM-accelerated iterative solvers or fast direct methods, because the latter incur large constant factors; 
this prefactor penalty is especially severe at high accuracies. For larger values of $n$, the dense linear 
methods can be easily be replaced by fast direct solvers (these would perform better than FMM accelerated 
iterative solvers since solutions for $\widetilde M$ different boundary data are required).

\begin{remark}
The cost of RCIP compression is dominated by kernel evaluations within
the recursion. Because our domains contain many corners, we fix the
lengths of the panels adjacent to each corner and interpolate the
resulting compressed RCIP blocks using a piecewise-Chebyshev expansion
in the corner opening angle. 
\end{remark}

\section{Numerical Examples}\label{sec:num_exam}
Algorithm~\ref{alg:1} has been implemented using \texttt{chunkie} --- an excellent MATLAB integral equation 
toolbox~\cite{chunkIE}. All numerical experiments in this section were run on a MacBook Pro with an M2 Max 
Chip (12 cores). The tolerances were set to $10^{-14}$, unless otherwise specified, and the Helmholtz 
wavenumber was fixed at $k=1$.

\subsection{Accuracy test}
In order to test the accuracy of the solver for~\eqref{eq:comp_IBVP}, consider the computational domain 
with a single output port in Fig.~\ref{fig:analytic_err}. Suppose that the boundary data in~\eqref{eq:comp_IBVP} 
corresponds to the known solution
\begin{equation}
     u(\bx) = G_k(\bx,\bx_0)
\end{equation}
with ~$\bx_0$ located outside the domain. 
In~\figref{fig:analytic_err}, we compare the numerical solution (for~$\bx_0=(40,20)$) with the exact field. 
The computed solution is correct to about ten digits of accuracy throughout the domain and away from the corners, 
for both Dirichlet and Neumann boundary conditions. 

\begin{figure}
    \centering
    \includegraphics[width=\linewidth]{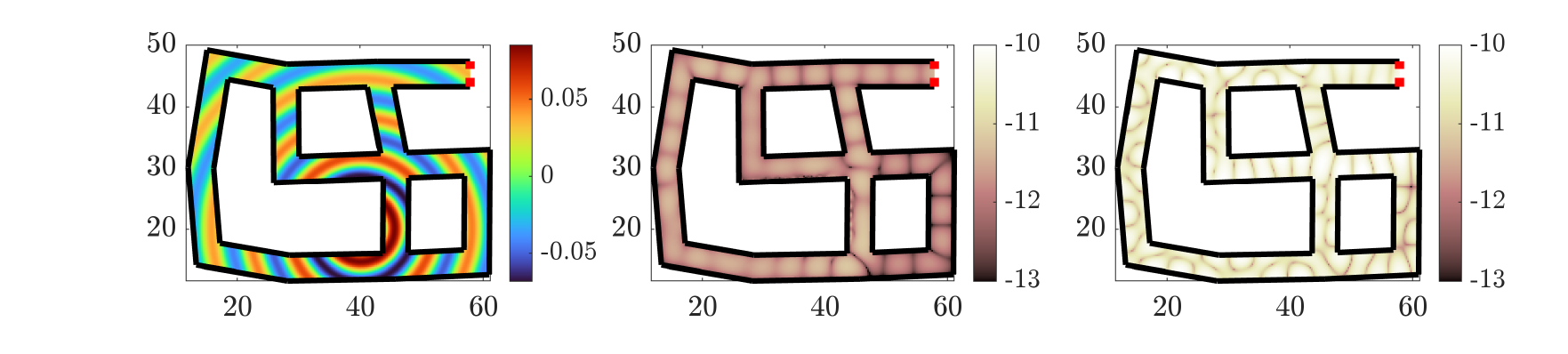}
    \caption{The results of the analytic solution test for our solvers for Dirichlet or Neumann waveguide 
    components. In the left figure, we plot the real part of the reference solution. In the middle and right 
    figures we plot  $\log_{10}$ of the absolute error for the Dirichlet and Neumann solvers respectively.}
    \label{fig:analytic_err}
\end{figure}

\subsection{Timing results and comparison with other fast solvers}
We now study how the computational cost of Algorithm~\ref{alg:1} scales with the number of 
components $N$. Consider a perturbed lattice of $6 \times n_{x}$ points with grid spacing~$15$ and 
perturbations drawn uniformly from $[-2.25,2.25]\times[-2.25,2.25]$. Consider a graph where $80\%$ of 
the edges corresponding to neighboring points on the unperturbed lattice are connected. The individual 
components $\tilde\Omega_{i}$ are defined as the union of straight channels for all edges on the graph 
at vertex $i$, and the device $\Omega_{0} = \cup_{i=1} \tilde\Omega_{i}$. The width of the channel 
to $d=\pi + 1$, so that there is one propagating mode for the Dirichlet case, and 2 propagating modes 
for the Neumann case. In this construction, we also enforce that the full device has exactly $3$ 
external ports for all values of $n_{x}$.

\figref{fig:merge_quilt} illustrates an example with $n_{x} = 19$. The number of components scale 
linearly with $n_{x}$ while $n$ and $\widetilde{M}$ remain approximately the same. For the example 
with $n_{x} = 19$, the average $n$ was $2717$ and the average $\widetilde{M}$ was $19$. The maximum number 
of modes in any channel $\max_{i,p} \widetilde M_{i,p}$ was $8$ indicating the use of evanescent modes 
in the merge step were necessary to achieve the desired tolerance. 

We compare the performance of Algorithm~\ref{alg:1} (labeled “Merged” in \figref{fig:merge_timing}) 
against three fast algorithms that differ only in how they compute $\Strunc$. The first, 
labeled “FMM,” solves~\eqref{eq:comp_IBVP} to a tolerance of~$10^{-10}$ with GMRES~\cite{GMRES}, 
accelerating matrix–vector products using the fast multipole method~\cite{JCP-1987-fmm} as implemented 
in \texttt{fmm2d}~\cite{fmm2d}. The second, labeled “FLAM,” solves~\eqref{eq:comp_IBVP} using the 
recursive-skeletonization fast direct solver~\cite{SISC-Ho-2012} in \texttt{FLAM}~\cite{ho2020flam}. 
The third, labeled “FMM + FLAM,” employs a low-accuracy recursive-skeletonization approximate inverse 
as a preconditioner for the FMM-accelerated GMRES solve; the approximate inverse was constructed to 
tolerance $10^{-3}$.

The total number of discretization points $n_{\mathrm{pts}}$ on $\Omega_{0}$ also scales linearly 
with $N$, but is a constant factor smaller than $Nn$; in all examples this factor satisfies $\le 2$. 
This reduction occurs because discretizing $\Omega_{0}$ does not require discretizing the subdomain 
boundaries $\gamma_{p,i}\subset\tilde\Omega_{i}$. The runtimes of all four methods as a function 
of $n_{\mathrm{pts}}$ are plotted in \figref{fig:merge_timing}. Algorithm~\ref{alg:1} is faster than 
all three fast alternatives described above, and its CPU time scales linearly with $n_{\mathrm{pts}}$. 
Although, asymptotically, the cost of solving the sparse linear system~\eqref{eq:merge_sys} would dominate, 
the prefactor is so small that it is negligible in these tests: in the largest Dirichlet waveguide example, 
constructing $\Strunc_i$ for $i=1,2,\ldots,N$ took $104$ seconds, whereas the sparse solve took only $0.025$ seconds.

The CPU time for the other three methods appears to scale worse than $O(n_{\textrm{pts}}^2)$. This can be 
attributed to several factors. For the iterative solvers, increasing $N$ enlarges the domain diameter 
measured in wavelengths, $N_{\lambda}$; one therefore expects the GMRES iteration count to grow with
$N_{\lambda}$~\cite{galkowski2019wavenumber}, and the FMM’s expansion order/tree depth also increase 
with frequency, inflating the per-iteration prefactor. For the fast direct solver, the cost of obtaining 
a compressed representation of the inverse is highly dependent on the size of the linear system to be 
inverted directly after all near interactions have been compressed. Similar to the results 
in~\cite{sushnikova2023fmm}, the scaling of $O(n_{\textrm{pts}}^2)$ may in part be explained by the 
increased size of this linear system with increasing $N_{\lambda}$.

\subsection{A large-scale example}
As a final example, we compute the scattering matrix~$\mathcal{S}$ for the geometry in \figref{fig:quilt_soln}. 
The domain is constructed in the same manner as the geometries from the previous section but uses a perturbed 
lattice of $20 \times 20$ points with a spacing of $24$ in both directions, and perturbations drawn uniformly 
from $[-7.8, 7.8] \times [-7.8, 7.8]$. The final geometry contains~$N=380$ components with 12 external ports, 
and~$\sum_i\sum_p \widetilde M_{i,p} = 10890$ of modes. It contains 82 corner junctions, 164 triple junctions, 
and 134 quadruple junctions. The average number of points on each component $\tilde\Omega_{i}$ was $3498$, 
the average number of modes $\widetilde{M}=29$, and the maximum number of modes in any 
channel $\max_{i,p} \widetilde M_{i,p}$ was $20$.

The scattering matrix for this circuit with Neumann boundary conditions was computed using 
Algorithm~\ref{alg:1} with the computation of~$\Strunc$ requiring 584 seconds, of which 0.23 seconds 
were spent solving the sparse linear system. The cost of computing~$\cS$ using~\eqref{eq:scattering_mat} 
for this circuit was negligible. In~\figref {fig:quilt_soln}, we plot the solution to~\eqref{eq:piece_helm} 
and \eqref{eq:piece_jump} with incoming data defined by a random collection of coefficients $\vec{c}_{-}$. 
The solution $u$ in $\Omega_{0}$ is computed using the impedance data on the interior edges which is 
readily available through the solution of ~\eqref{eq:final_i2i_glue}. Once the interior impedance data 
is known, we use the single layer representation to compute the solution in each component. For this example, 
solving the local systems and evaluating the field at 918213 grid points took 95.4 seconds. 
\begin{figure}
    \centering
    \includegraphics[width=0.9\linewidth]{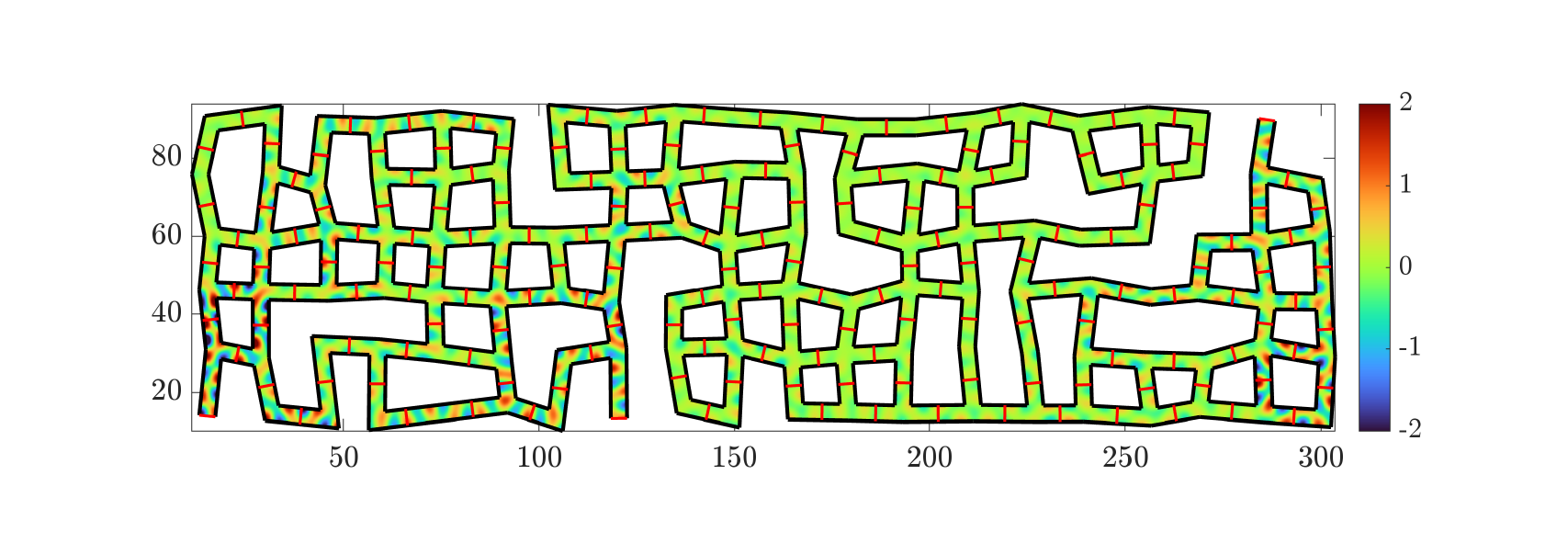}
    \caption{This figure shows the geometry used in our speed tests. In each test, we build 
    a $6\times n_x$ grid of simple components and study how the computational time scales with~$n_x$. 
    The red lines indicate the boundaries between adjacent components.}
    \label{fig:merge_quilt}
\end{figure}

\begin{figure}
    \centering
    \includegraphics[width=0.9\linewidth]{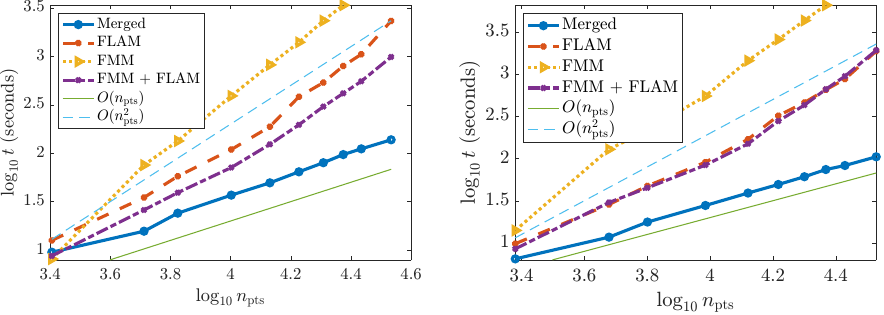}
    \caption{These figures show the time to find $\Strunc$ for the geometry in \figref{fig:merge_quilt} 
    versus number of points in the global discretization as we increase $n_x$. The left and rights 
    figure shows the timings for the Dirichlet and Neumann boundary conditions respectively.}
    \label{fig:merge_timing}
\end{figure}
\begin{figure}
    \centering
    \includegraphics[width=0.65\linewidth]{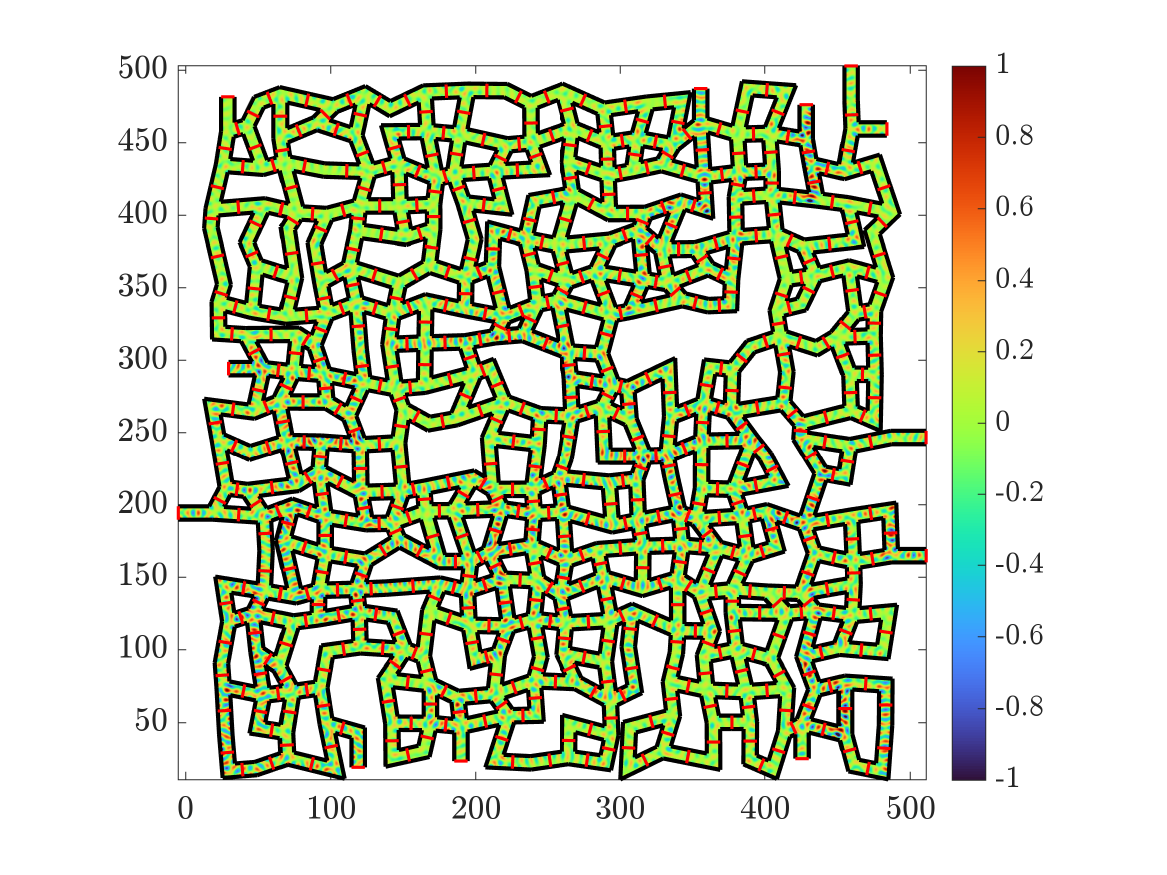}
    \caption{An example solution of the Neumann waveguide problem in our larger computational domain 
    formed from a~$20\times 20$ grid of simple components.}
    \label{fig:quilt_soln}
\end{figure}

\section{Conclusion}\label{sec:concl}
In this work, we present an analysis of and a new numerical method
for time-harmonic wave scattering problems in large metallic waveguide
systems in two dimensions. We first show that even though these waveguide
systems can have trapped modes, the projection of the solution onto
the propagating parts in the ports (which are infinite in extent)
is uniquely determined. The proposed numerical method, based on a
divide-and-conquer approach, first constructs the solution operators
of smaller subdomains in a compressed basis, then constructs the solution
in the whole device by enforcing continuity conditions across interfaces.
This approach outperforms FMM-accelerated iterative solvers, recursive
skeletonization based fast direct solvers, and hybrid preconditioned
iterative solvers by almost two orders of magnitude for devices 80
wavelengths in size in both directions.

For all the examples considered in this work, the size of the linear
system to be solved post compression was sufficiently small so that
dense inversion was the most computationally efficient. However, as
the complexity of the devices grows, fast methods would be required
for its inversion. The linear system to be inverted is block sparse
with non-zero entries on subdomains that share an interface --- enabling
the use of fast sparse solvers like UMFPACK~\cite{davis2004algorithm}.
Alternatively, the hierarchical part of surface HPS solvers which
relies on the local connectivity structure for merging solution operators
on unstructured meshes (as opposed to standard HPS solvers, for which
the data structure is a logical quadtree) can be easily adopted as
well.

The other natural extensions of this work include its extension to
the solution of metallic waveguide systems in three dimensions, and
to dielectric waveguide systems in two or three dimensions. While
the approach presented in this work extends easily to three dimensional
metallic waveguide systems, the extension to dielectric waveguide
systems is significantly more challenging owing to slow decay of the
lossy radiation into the exterior of the device. The identification
of an appropriate basis for the solution operators in smaller subdomains
which would result in its efficient compression remains an open area
of research.

\section*{Acknowledgments}
We would like to thank Leslie Greengard at Flatiron Institute and New York University 
and Jeremy Hoskins at University of Chicago for many useful discussions.  The Flatiron Institute 
is a division of the Simons Foundation.

\bibliographystyle{siamplain}
\bibliography{references.bib}

\begin{thebibliography}{10}

\bibitem{ComCompP-2009-Antonie}
{\sc X.~Antoine, P.~Dreyfuss, and K.~Ramdani}, {\em A construction of beam propagation methods for optical waveguides}, Commun. Comput. Phys., 6 (2009), pp.~565--576.

\bibitem{JACT-2010-Antonie}
{\sc X.~Antoine, Y.~Huang, and Y.~Y. Lu}, {\em Computing high-frequency scattered fields by beam propagation methods: A prospective study}, J. Algorithms Comput. Technol., 4(2) (2010), pp.~147--166.

\bibitem{chunkIE}
{\sc T.~Askham, M.~Rachh, M.~O'Neil, J.~Hoskins, D.~Fortunato, S.~Jiang, F.~Fryklund, T.~Goodwill, H.~Yang, and H.~Zhu}, {\em chunk{IE}: a {MATLAB} integral equation toolbox}.
\newblock \url{https://github.com/fastalgorithms/chunkie}, 2024.

\bibitem{RMMAN-2017-Bamberger}
{\sc A.~Bamberger, F.~Coron, and J.-M. Ghidaglia}, {\em An analysis of the {B.P.M.} approximation of the {H}elmholtz equation in an optical fiber}, RAIRO Mod\'{e}l. Math. Anal. Num\'{e}r., 21(3) (2017), pp.~405--424.

\bibitem{JCP-1986-Berenger}
{\sc J.~P. Berenger}, {\em A perfectly matched layer for the absorption of electromagnetic waves}, J. Comput. Phys., 114(2) (1994), pp.~185--200.

\bibitem{SIAMAppMath-2011-Bendhia}
{\sc A.~S. Bonnet-Bendhia, B.~Goursaud, and C.~Hazard}, {\em Mathematical analysis of the junction of two acoustic open waveguides}, SIAM J. Appl. Math., 71 (2011), pp.~2048--2071.

\bibitem{SIAMAppMath-2001-Bendhia}
{\sc A.~S. Bonnet-Bendhia and A.~Tillequin}, {\em A limiting absorption principle for scattering problems with unbounded obstacles}, Math. Methods Appl. Sci., 24 (2001), pp.~1089--1111.

\bibitem{borges2025construction}
{\sc C.~Borges, L.~Greengard, M.~O’Neil, and M.~Rachh}, {\em On the construction of scattering matrices for irregular or elongated enclosures using {G}reen’s representation formula}, J. Sci. Comput., 102 (2025), p.~2.

\bibitem{IEEE-Boriskina-2002}
{\sc S.~V. Boriskina, T.~M. Benson, P.~Sewell, and A.~I. Nosich}, {\em Highly efficient full-vectorial integral equation solution for the bound, leaky, and complex modes of dielectric waveguides}, IEEE J. Sel. Top. Quantum Electron., 8 (2002), pp.~1225--1232.

\bibitem{bremer2015high}
{\sc J.~Bremer, A.~Gillman, and P.-G. Martinsson}, {\em A high-order accurate accelerated direct solver for acoustic scattering from surfaces}, BIT Numer. Math., 55 (2015), pp.~367--397.

\bibitem{bremer2010nonlinear}
{\sc J.~Bremer, Z.~Gimbutas, and V.~Rokhlin}, {\em A nonlinear optimization procedure for generalized {G}aussian quadratures}, SIAM J. Sci. Comput., 32 (2010), pp.~1761--1788.

\bibitem{Book-Brenner-Scott}
{\sc S.~C. Brenner and L.~R. Scott}, {\em The Mathematical Theory of Finite Element Methods}, Springer, New York, 2007.

\bibitem{SIAMCompAppMath-2007-Chandler}
{\sc S.~Chandler-wilde, P.~Monk, and M.~Thomas}, {\em The mathematics of scattering by unbounded, rough, inhomogeneous layers}, J. Comput. Appl. Math., 204 (2007), pp.~549--559.

\bibitem{RSLPS-1998-Chandler}
{\sc S.~Chandler-Wilde and B.~Zhang}, {\em Electromagnetic scattering by an inhomogeneous conducting or dielectric layer on a perfectly conducting plate}, R. Soc. Lond. Proc. Ser. A Math. Phys. Eng. Sci., 454 (1998), pp.~519--542.

\bibitem{Book-Kress-Colton-2013}
{\sc D.~Colton and R.~Kress}, {\em Integral Equation Methods in Scattering Theory}, SIAM, 2013.

\bibitem{crutchfield2006remarks}
{\sc W.~Crutchfield, Z.~Gimbutas, L.~Greengard, J.~Huang, V.~Rokhlin, N.~Yarvin, and J.~Zhao}, {\em Remarks on the implementation of wideband {FMM} for the {H}elmholtz equation in two dimensions}, Contemp. Math., 408 (2006).

\bibitem{davis2004algorithm}
{\sc T.~A. Davis}, {\em Algorithm 832: {UMFPACK V4. 3}---an unsymmetric-pattern multifrontal method}, ACM Trans. Math. Softw., 30 (2004), pp.~196--199.

\bibitem{JCP-2017-Fan}
{\sc K.~Fan, W.~Cai, and X.~Ji}, {\em A full vectorial generalized discontinuous {G}alerkin beam propagation method ({GDG–BPM}) for nonsmooth electromagnetic fields in waveguides}, J. Comput. Phys., 227(15) (2008), pp.~7178--7191.

\bibitem{fortunato2024high}
{\sc D.~Fortunato}, {\em A high-order fast direct solver for surface pdes}, SIAM J. Sci. Comput., 46 (2024), pp.~A2582--A2606.

\bibitem{fortunato2021ultraspherical}
{\sc D.~Fortunato, N.~Hale, and A.~Townsend}, {\em The ultraspherical spectral element method}, J. Comput. Phys., 436 (2021), p.~110087.

\bibitem{galkowski2019wavenumber}
{\sc J.~Galkowski, E.~H. M{\"u}ller, and E.~A. Spence}, {\em Wavenumber-explicit analysis for the helmholtz h-bem: error estimates and iteration counts for the dirichlet problem}, Numerische Mathematik, 142 (2019), pp.~329--357.

\bibitem{garza2020boundary}
{\sc E.~Garza}, {\em Boundary integral equation methods for simulation and design of photonic devices}, California Institute of Technology, 2020.

\bibitem{gillman2015spectrally}
{\sc A.~Gillman, A.~H. Barnett, and P.-G. Martinsson}, {\em A spectrally accurate direct solution technique for frequency-domain scattering problems with variable media}, BIT Numer. Math., 55 (2015), pp.~141--170.

\bibitem{gillman2014direct}
{\sc A.~Gillman and P.-G. Martinsson}, {\em A direct solver with {$O(N)$} complexity for variable coefficient elliptic pdes discretized via a high-order composite spectral collocation method}, SIAM J. Sci. Comput., 36 (2014), pp.~A2023--A2046.

\bibitem{gimbutas2013fast}
{\sc Z.~Gimbutas and L.~Greengard}, {\em Fast multi-particle scattering: A hybrid solver for the {M}axwell equations in microstructured materials}, J. Comput. Phys., 232 (2013), pp.~22--32.

\bibitem{greengard2014fast}
{\sc L.~Greengard, K.~L. Ho, and J.-Y. Lee}, {\em A fast direct solver for scattering from periodic structures with multiple material interfaces in two dimensions}, J. Comput. Phys., 258 (2014), pp.~738--751.

\bibitem{JCP-1987-fmm}
{\sc L.~Greengard and V.~Rokhlin}, {\em A fast algorithm for particle simulations}, J. Comput. Phys., 73(2) (1987), pp.~325--348.

\bibitem{fmm2d}
{\sc Z.~Gymbutas, L.~Greengard, M.~O'Neil, M.~Rachh, and V.~Rokhlin}, {\em Fast multipole methods in two dimensions (fmm2d)}.
\newblock \url{https://fmm2d.readthedocs.io/en/latest/}.

\bibitem{RCIP-Tutorial}
{\sc J.~Helsing}, {\em Solving integral equations on piecewise smooth boundaries using the {RCIP} method: a tutorial}.
\newblock \url{https://www.maths.lth.se/na/staff/helsing/Tutor/index.html}.
\newblock Accessed: February 13th, 2023.

\bibitem{SISC-2018-Helsing-Jiang}
{\sc J.~Helsing and S.~Jiang}, {\em On integral equation methods for the first {D}irichlet problem of the biharmonic and modified biharmonic equations}, SIAM J. Sci. Comput., 40(4) (2018), pp.~A2609--A2630.

\bibitem{JCP-2022-Helsing-Jiang}
{\sc J.~Helsing and S.~Jiang}, {\em Solving {F}redholm second-kind integral equations with singular right-hand sides on non-smooth boundaries}, J. Comput. Phys., 448 (2022).

\bibitem{JCP-2008-Helsing}
{\sc J.~Helsing and R.~Ojala}, {\em Corner singularities for elliptic problems: Integral equations, graded meshes, quadrature and compressed inverse preconditioning}, J. Comput. Phys., 227 (2008).

\bibitem{helsing2008jcp}
{\sc J.~Helsing and R.~Ojala}, {\em On the evaluation of layer potentials close to their sources}, J. Comput. Phys., 227 (2008), pp.~2899--2921.

\bibitem{MathComp-1986-Higdon}
{\sc R.~Higdon}, {\em Absorbing boundary conditions for difference approximations to the multi-dimensional wave equation}, Math. Comput., 47(176) (1986), pp.~437--459.

\bibitem{ho2020flam}
{\sc K.~L. Ho}, {\em {FLAM}: Fast linear algebra in matlab-algorithms for hierarchical matrices}, JOSS, 5 (2020), p.~1906.

\bibitem{SISC-Ho-2012}
{\sc K.~L. Ho and L.~Greengard}, {\em A fast direct solver for structured linear systems by recursive skeletonization}, SIAM J. Sci. Comput., 34(5) (2012), pp.~A2507--A2532.

\bibitem{Book-Itoh}
{\sc T.~Itoh, G.~Pelosi, and P.~Silvester}, {\em Finite Element Software for Microwave Engineering}, Wiley-Interscience, 1996.

\bibitem{klockner2013jcp}
{\sc A.~Kl{\"o}ckner, A.~Barnett, L.~Greengard, and M.~O'Neil}, {\em Quadrature by expansion: A new method for the evaluation of layer potentials}, J. Comput. Phys., 252 (2013), pp.~332--349.

\bibitem{Book-Koshiba}
{\sc M.~Koshiba}, {\em Optical Waveguide Theory by the Finite Element Method}, Springer, Dordrecht, 1992.

\bibitem{Kragl-1992}
{\sc H.~Kragl}, {\em Cases of structural equality between the scalar wave equations of optics and the quantum-mechanical {S}chr\"{o}dinger equation}, J. Opt. Soc. Am. A, 9(6) (1992), pp.~964--973.

\bibitem{Kumbhakar-2008}
{\sc D.~Kumbhakar}, {\em Condensed matrix method for implicit type scheme in imaginary distance beam propagation method}, J. Comput. Methods Sci., 8(1-2) (2008), pp.~139--146.

\bibitem{martinsson2015hierarchical}
{\sc P.~Martinsson}, {\em The hierarchical {P}oincar{\'e}-{S}teklov ({HPS}) solver for elliptic {PDE}s: A tutorial}, arXiv preprint arXiv:1506.01308,  (2015).

\bibitem{mclean2000}
{\sc W.~C.~H. McLean}, {\em Strongly elliptic systems and boundary integral equations}, Cambridge university press, 2000.

\bibitem{IEEETransEC-1981-Mur}
{\sc G.~Mur}, {\em Absorbing boundary conditions for the finite-difference approximation of the time-domain electromagnetic field equations}, IEEE Trans. Electromagn. Compat., 23(4) (1981), p.~377–382.

\bibitem{Book-Nedelec}
{\sc J.~Ne\'ed\'elec}, {\em Acoustic and Electromagnetic Equations}, Springer, 2001.

\bibitem{Book-Okamoto}
{\sc K.~Okamoto}, {\em Fundamentals of Optical Wavuguides}, Academic Press, 2022.

\bibitem{Book-Rao}
{\sc S.~Rao}, {\em Time Domain Electromagnetics}, Academic Press, 1999.

\bibitem{GMRES}
{\sc Y.~Saad and M.~Schultz}, {\em {GMRES}: A generalized minimal residual algorithm for solving nonsymmetric linear systems}, SIAM J. Sci. Stat. Comp., 7 (1986).

\bibitem{shelepov1969index}
{\sc V.~Shelepov}, {\em The index of an integral operator of potential type in the space {$L_p$}}, in Doklady Akademii Nauk, vol.~186, Russian Academy of Sciences, 1969, pp.~1266--1268.

\bibitem{sideris2019ultrafast}
{\sc C.~Sideris, E.~Garza, and O.~P. Bruno}, {\em Ultrafast simulation and optimization of nanophotonic devices with integral equation methods}, ACS Photonics, 6 (2019), pp.~3233--3240.

\bibitem{sugita2023thesis}
{\sc K.~Sugita}, {\em Boundary Integral Equation Methods for Superhydrophobic Flow and Integrated Photonics}, PhD thesis, New Jersey Institute of Technology, 2023.

\bibitem{sushnikova2023fmm}
{\sc D.~Sushnikova, L.~Greengard, M.~O’Neil, and M.~Rachh}, {\em {FMM-LU}: A fast direct solver for multiscale boundary integral equations in three dimensions}, Multiscale Modeling \& Simulation, 21 (2023), pp.~1570--1601.

\bibitem{tsukerman2008computational}
{\sc I.~Tsukerman}, {\em Computational methods for nanoscale applications}, Springer, 2008.

\bibitem{wang2025scattering}
{\sc H.~Wang, F.~Fryklund, S.~Potter, and L.~Greengard}, {\em Scattering theory for {S}tokes flow in complex branched structures}, arXiv preprint arXiv:2509.12500,  (2025).

\bibitem{wendland2009double}
{\sc W.~L. Wendland}, {\em On the double layer potential}, in Analysis, Partial Differential Equations and Applications: The Vladimir Maz’ya Anniversary Volume, Springer, 2009, pp.~319--334.

\bibitem{Book-Yamashita}
{\sc E.~Yamashita}, {\em Analysis Methods for Electromagnetic Wave Problems}, Artech House Publishers, 1990.

\bibitem{yasumoto2018electromagnetic}
{\sc K.~Yasumoto}, {\em Electromagnetic theory and applications for photonic crystals}, CRC press, 2018.

\bibitem{IEEETransAntenna-1966-Yee}
{\sc K.~Yee}, {\em Numerical solution of initial boundary value problems involving {M}axwell's equations in isotropic media}, IEEE Trans. Antennas Propag., 14(3) (1966), pp.~302--307.

\bibitem{Yeh1979SinglemodeOW}
{\sc C.~Yeh, K.~W. Ha, S.~B. Dong, and W.~P. Brown}, {\em Single-mode optical waveguides}, Appl. Opt., 18 (1979), pp.~1490--504.

\bibitem{JOSAA-2009-Yioultsis}
{\sc T.~V. Yioultsis, G.~D. Ziogos, and E.~E. Kriezis}, {\em Explicit finite-difference vector beam propagation method based on the iterated {C}rank-{N}icolson scheme}, J. Opt. Soc. Amer. A, 26(10) (2009), pp.~2183--2191.

\bibitem{Young-1988}
{\sc T.~P. Young}, {\em Design of integrated optical circuits using finite elements}, IEEE Proc., 135(3) (1988), pp.~135--144.

\bibitem{zhu2025recursive}
{\sc X.~Zhu and J.~Lai}, {\em Recursive sparse {LU} decomposition based on nested dissection and low rank approximations}, J. Comput. Phys., 539 (2025), p.~114231.

\bibitem{Book-Zienkiewicz}
{\sc O.~C. Zienkiewicz}, {\em The Finite Element Method In Engineering Science}, Butterworth-Heinemann, 2013.

\end{thebibliography}

\end{document}